\documentclass[preprint,12pt]{elsarticle}

\usepackage{natbib}
\usepackage{graphicx}
\usepackage{multirow}
\usepackage{amsmath,amssymb,amsfonts}
\usepackage{amsthm}
\usepackage{booktabs}
\usepackage{algorithm}
\usepackage{algpseudocode}
\usepackage{subcaption}
\usepackage{float}
\usepackage{placeins}

\newtheorem{theorem}{Theorem}
\newtheorem{proposition}{Proposition}
\newtheorem{corollary}{Corollary}
\newtheorem*{remark}{Remark}

\theoremstyle{definition}

\begin{document}

\begin{frontmatter}

\title{Queueing Analysis and Cost Optimization in a
Diagnostic--Treatment Hospital Queue with Heterogeneous Referred Patients
\tnoteref{thesisnote}}

\author[a]{Shobha Rani\corref{cor1}}
\ead{shobharani@iitk.ac.in}

\author[a]{Rohit Dehru}
\ead{rohit.dehru@iitk.ac.in}

\author[a]{Raghu Nandan Sengupta}
\ead{rns@iitk.ac.in}

\address[a]{Department of Management Sciences,
Indian Institute of Technology Kanpur,
Uttar Pradesh 208016, India}

\begin{abstract}
This study develops an analytical and decision framework for a pooled
hospital service comprising first-time patients who require diagnosis
followed by treatment and referred patients who require only their
prescribed treatment mode. The system is represented as a Markovian
phase-type queue. probability generating functions are
derived for two-treatment cases, while a matrix-analytic method is
developed for an arbitrary number $n$ of treatment modes. The key
performance measures are derived and interpreted. A load-triggered
control is designed to determine the minimum capacity increment required
to achieve a prescribed utilization level. Numerical experiments are
conducted to validate the analytical results. Furthermore, the total-cost
problem admits a strictly convex reformulation in reciprocal service-time
variables and has a unique global minimum. Particle Swarm Optimization
(PSO), Simulated Annealing (SA), and the Sine Cosine Algorithm (SCA) are
additionally used as independent heuristic solvers.

\end{abstract}
\begin{keyword}
Markovian queue \sep Hospital service \sep Probability generating
function \sep Cost minimization
\end{keyword}

\end{frontmatter}

\section{Introduction}\label{sec1}

In healthcare systems, performance analysis of patient queues is of
prime importance, as it addresses the fundamental challenge of
delivering affordable and timely medical care under limited resources.
Modern specialty hospitals, referral clinics, and diagnostic centres
frequently operate as pooled service systems, in which newly arriving
patients requiring an initial diagnostic assessment and referred
patients requiring a specific treatment share a common service.
When treatment requirements themselves vary across patients, the
resulting interplay of congestion and service
capacity cannot be captured by a single-class or single-phase queueing
model. This points to a fundamental challenge in managing patient
flows across the diagnostic and treatment phase in a hospital service
Motivated by these challenges, this paper develops a Markovian queueing
framework for a pooled hospital service involving first-time and referred
patients. First time patients receive diagnosis followed immediately by one of
$n$ heterogeneous treatment modes, whereas referred patients bypass
diagnosis and directly receive their prescribed treatment. All patients compete for the same aggregate First Come, First Served (FCFS) rule. Unlike
\cite{liu2024_strategic_queueing_patients}, who investigated the
strategic joining behavior of two patient groups in a related
healthcare setting, the present work takes the arrival process as
exogenous and focuses on operational performance analysis, capacity
design, and cost optimization under stability. Despite an extensive
queueing literature on healthcare, existing studies have examined pooled diagnosis--treatment queues,
patient referral decisions, and healthcare capacity planning through
different analytical frameworks. However, the joint treatment of
heterogeneous treatment requirements, an implementable arbitrary $n$
stationary solution and cost optimization remains limited within the present FCFS setting. 
The main contributions of this study are as follows.

\begin{itemize}
    \item Formulating a Markovian queue of a 
    hospital that captures two patient classes with $n$ heterogeneous
    treatment modes.
    
    \item Deriving stationary probabilities (using probability generating functions) together with key performance measures for two-treatment case. we extend the analysis to an arbitrary number of treatment modes through a matrix-analytic method.
    
    \item A load-triggered capacity control is
    developed to determine the phase whose service rate should be increased and
    returns the minimum feasible capacity increment. A
    minimum investment rule extends the control to marginal
    capacity costs.
    
\item A cost function is formulated and reduces to a strictly
    convex program with a unique global minimum, characterized through Karush--Kuhn--Tucker(KKT) conditions. Particle swarm optimization (PSO),
    simulated annealing (SA), and the sine cosine algorithm (SCA) are also used as independent heuristic solvers.
\end{itemize}

The remainder of the paper is organized as follows.
Section~\ref{sec2} reviews the related literature on analytical queueing models for healthcare, discrete-event simulation, cost optimization, and referral-based service designs. Section~\ref{sec:model} formulates the model to derive the
stationary probabilities, characterizes the performance
measures, followed by the analysis for n>2
through the matrix-analytic representation. Section 4
reports the sensitivity analysis to see how the system reacts. Section 5 minimizes the total cost function and presents the load-triggered control. Finally, the concluding remarks, which outline future directions are discussed in Section 6.

\section{Literature Review}
\label{sec2}

Existing research for healthcare delays can be broadly classified into
four complementary domains: analytical performance models with healthcare
applications, matrix-analytic and phase-type methods for multi-phase
service, optimization approaches for queueing systems,
and referral based healthcare service systems. Together, these strands
form a comprehensive foundation for developing advanced performance
models and decision support tools for healthcare service delivery.

Classical queueing theory provides a strong analytical foundation for
modeling service systems and has been extensively developed in text by
\citep{gross2011_fundamentals_queueing_theory,
baccelli2013_elements_queueing, bhat2008_introduction_queueing,
stewart2009_probability_markov}. These classical works form the basis
for modern applications in healthcare, communication networks, and
service operations. Early studies relied on classical Markovian and
birth--death models to derive steady-state measures such as waiting
times, queue lengths, and utilization
\citep{green2006_queueing, helm2011_hospital_admission_control,
worthington1987_waiting_list_models, gupta2007_operating_room_scheduling}.
These models, rooted in the foundations of Erlang teletraffic, have
since evolved to incorporate analytical techniques, including
closed-form steady state solutions
\citep{arizono2021_mm1_balking_stat_mechanics}, transient analyses for
finite capacity systems \citep{tang2022_transient_bulk_arrival_iot},
and cost sensitive formulations \citep{bouchentouf2019_cost}.

Despite their mathematical elegance, analytical models often struggle
with the complexity of real healthcare operations, where heterogeneous
services and multi stage workflows are the norm. Healthcare delivery
naturally lends itself to queueing analysis, as patient arrivals,
service times, and resource constraints inherently follow stochastic
patterns. Classical models such as $M/M/1$ and $M/M/c$ have been
applied to analyze outpatient clinics, emergency units, diagnostic
centers, and surgical wards \citep{green2006_queueing,
palvannan2012_queueing_healthcare, peter2019_queueing_healthcare_outpatient}.
Empirical works such as \cite{yaduvanshi2019_hospital_wait_optimization}
demonstrate how service rate adjustments can reduce waiting times
substantially in Indian hospitals. Extensions to inpatient and multi
facility networks have addressed nonstationary inflow
\citep{dong2020_patient_flow_wards} and mobile scheduling
\cite{lin2023_patient_scheduling_eis}.

Beyond scalar birth--death models, matrix-analytic methods provide a
tractable framework for multi-phase and heterogeneous service systems
whose stationary distribution does not admit a scalar generating
function. The theory of phase-type distributions and quasi-birth-and-death
processes originated with Neuts~\cite{neuts1981matrixgeometric}, and
the logarithmic-reduction and cyclic-reduction algorithms of Latouche
and Ramaswami~\cite{latoucheramaswami1999introduction} made the
computation of the rate matrix $\mathbf{R}$ both stable and efficient.
The approach has since been applied to multi-server systems with
correlated arrivals, priority queues, and healthcare service networks
with heterogeneous service phases \cite{he2014fundamentals}. The
present study exploits this framework by representing the pooled
diagnostic--treatment system as an $M/PH/1$ queue with a QBD structure
and by solving it through a matrix-analytic recursion for arbitrary
$n$.

When closed-form analytical solutions become intractable,
discrete-event simulation is the standard complementary tool for
performance prediction and system experimentation in queueing-based
healthcare research \citep{peter2019_queueing_healthcare_outpatient,
saini2025_covid_queueing_impact}.
\citep{tan2013_dynamic_queue_management_ed} employed dynamic queue
management, verified via simulation, to reduce patient waiting times
in emergency departments, while
\citep{tamuli2025_reverse_balking_simulation} developed a
simulation-based framework for estimating performance measures in
$M/M/1$ queues under customer behavioural effects. Collectively, these
studies underscore the value of simulation for capturing time-varying
demand, complex routing, and operational uncertainty beyond the reach
of analytical tractability.

Optimization frameworks have become integral to performance improvement
in queueing based healthcare systems, particularly when analytical
formulations yield nonconvex or computationally intensive objective
functions. Both classical mathematical programming techniques and
modern metaheuristic algorithms such as genetic algorithm (GA), PSO,
SA, and SCA, etc have been employed to identify near optimal control
parameters \citep{bradley2005_optimal, yang2010_single_working_vacation,
sanga2019_mm1k_retrial_cost}. For example,
\cite{ke2010cost} used SA to minimize staffing costs in an $M/M/r$
queue, while \cite{sanga2024_cost_ml_waiting_time} constructed cost
functions for waiting-time minimization in Markovian machining systems.
PSO, known for its simplicity and rapid convergence, has been widely
applied to optimize healthcare queueing systems
\citep{remya2024_mm1wv_mav_cost, wang2025_markov_ed_pso,
dhibar2025_metaheuristic, thakur2025_bernoulli_malfunction_metaheuristic}.
These optimization approaches illustrate how integrating heuristic
search in queueing theory enables the development of cost effective
and operationally efficient systems.

A separate but essential thread concerns the analytical structure of
capacity-and-cost optimization for queueing systems. Convex analysis
of the P-K expectation in the reciprocal
service-time variables underpins classical results on optimal service
rate selection for $M/G/1$ systems~\cite{stidham1985optimal,
weberstidham1987optimal}, and the general apparatus of convex programming
and perspective functions is developed in
Boyd and Vandenberghe~\cite{boydvandenberghe2004convex}. Building on
this line, the present paper reformulates the total-cost problem
under the reciprocal change of variables $y_j=1/\mu_j$, establishes
strict convexity through the perspective of a positive-semidefinite
quadratic form, and characterizes the unique global minimum through
KKT conditions rather than relying on heuristic
search alone.

Referral-based systems involving sequential diagnostic and treatment
stages introduce an additional layer of complexity.
\cite{liu2015_mutual_referral_policy} proposed mutual referral
policies across interconnected healthcare facilities, while
\cite{yu2022_referral_tiered} examined tiered hospital systems under
gatekeeping structures. \cite{pazinfilho2024_surgical_waiting}
analyzed surgical waiting lists using two phase queueing models to
identify bottlenecks between scheduling and operating rooms. Studies
across outpatient departments, inpatient wards, and emergency
services show that queueing approaches can substantially reduce
delays and improve efficiency \citep{yaduvanshi2019_hospital_wait_optimization,
dong2020_patient_flow_wards, lin2023_patient_scheduling_eis}. Yet,
most existing models assume homogeneous patients or single-phase
service structures, neglecting referral dynamics and heterogeneous
treatment options.

Recent research has started to address related aspects of this
problem: \cite{liu2024_strategic_queueing_patients} studied the
strategic queueing decisions of first-time and referred patients in a
pooled diagnosis--treatment setting. This work collectively motivates
the present research focus on diagnostic--treatment
framework with heterogeneous treatment modes.

\section{Model formulation}
\label{sec:model}
We consider a pooled hospital service in which newly arriving
and referred patients wait in a common FCFS
queue. The hospital is represented by a single aggregate service
channel that processes one patient service phase at a time. Depending on the requirements of the patient currently in service, the service
channel operates either in a diagnostic mode or in one of $n$
heterogeneous treatment modes.
Patients are classified into two groups. The first group consists
of newly arriving, undiagnosed patients. Such patients first undergo diagnosis and then, without releasing the
aggregate service channel or rejoining the queue, continue directly to
one of the $n$ treatment modes. Thus, diagnosis and the subsequent
treatment constitute one uninterrupted patient service requirement,
and FCFS applies at the patient level.
\begin{figure}[htbp]
    \centering
    \includegraphics[width=1.0\linewidth]{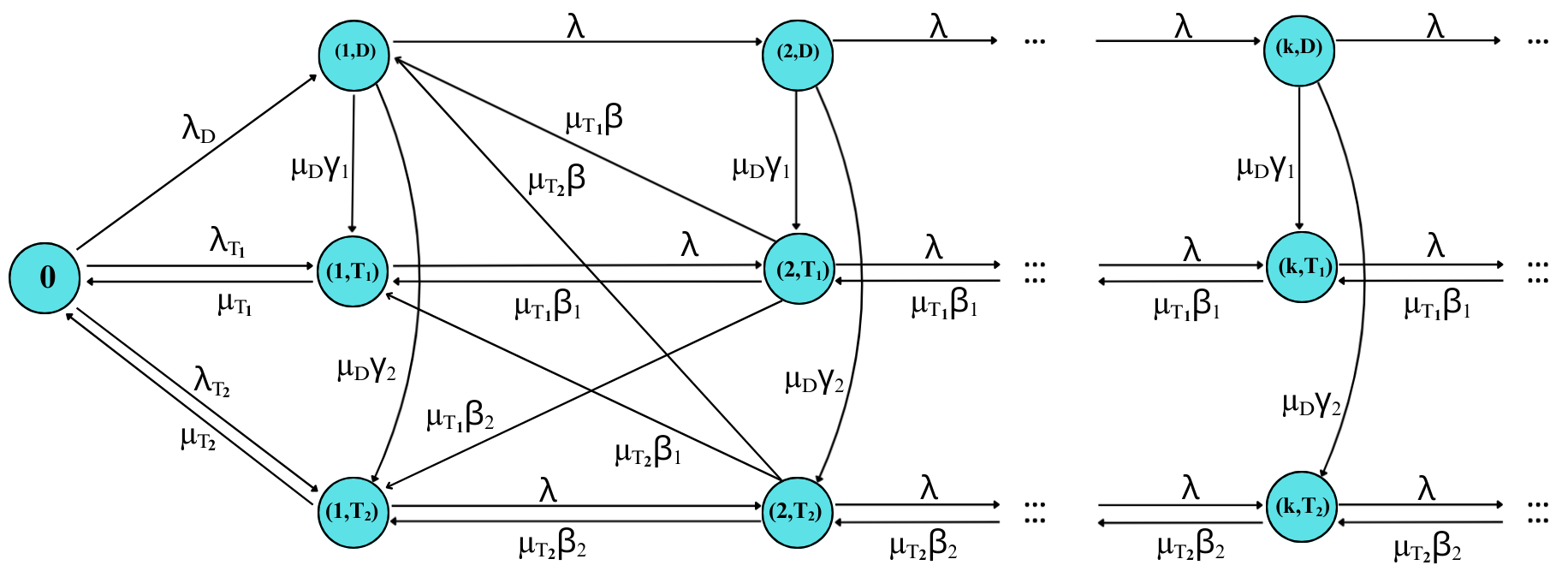}
    \caption{Rate-transition diagram for the pooled diagnostic--treatment
    queue with two heterogeneous treatment modes.}
    \label{fig:rate_transition}
\end{figure}
The arrival processes of new and referred patients follow Poisson processes. Let $\lambda_D$ denote the
arrival rate of new patients, and let $\lambda_{T_i}$ denote the
arrival rate of referred patients requiring treatment mode
$T_i$, $i=1,\ldots,n$. By the superposition property of independent Poisson processes, the
total external patient arrival rate is
$\lambda = \lambda_D + \sum_{i=1}^{n} \lambda_{T_i}$. Define
$\beta = \lambda_D/\lambda$ and $\beta_i = \lambda_{T_i}/\lambda$
for $i = 1, \ldots, n$, so that
$\beta + \sum_{i=1}^{n} \beta_i = 1$. Here, $\beta$ denotes the
fraction of new patients, whereas $\beta_i$ denotes the fraction of
referred patients whose required treatment is of type $i$. After
completing diagnosis, a new patient is routed to treatment mode $T_i$
with probability $\gamma_i$, where $\gamma_i \geq 0$ and
$\sum_{i=1}^{n} \gamma_i = 1$.
Let $D\sim\operatorname{Exp}(\mu_D)$ and
$T_i\sim\operatorname{Exp}(\mu_{T_i})$. All service phases and arrival processes are
assumed to be mutually independent.
The service requirement $S$ of
an arriving patient can then be represented as
\begin{equation}
S=
\begin{cases}
T_i,
&
\text{with probability }\beta_i,
\\[1mm]
D+T_i,
&
\text{with probability }\beta\gamma_i,
\end{cases}
\qquad i=1,\ldots,n.
\label{eq:service_time_mixture}
\end{equation}
Thus, the service-time distribution is phase-type, and
the system can be classified as an $M/PH/1$ queue. A phase-type representation is obtained by defining the initial phase
probability vector
\begin{equation}
\boldsymbol{\alpha}
=
\begin{pmatrix}
\beta & \beta_1 & \beta_2 & \cdots & \beta_n
\end{pmatrix}
\label{eq:ph_initial_vector}
\end{equation}
and the transient generator
\begin{equation}
\mathbf{T}
=
\begin{pmatrix}
-\mu_D
&
\mu_D\gamma_1
&
\mu_D\gamma_2
&
\cdots
&
\mu_D\gamma_n
\\
0
&
-\mu_{T_1}
&
0
&
\cdots
&
0
\\
0
&
0
&
-\mu_{T_2}
&
\cdots
&
0
\\
\vdots
&
\vdots
&
\vdots
&
\ddots
&
\vdots
\\
0
&
0
&
0
&
\cdots
&
-\mu_{T_n}
\end{pmatrix}.
\label{eq:ph_subgenerator}
\end{equation}
A referred type-$i$ patient begins service directly in phase $T_i$,
whereas a new patient begins in the diagnostic phase and subsequently
moves to phase $T_i$ with probability $\gamma_i$. The mean service requirement of an arbitrary patient is
\begin{equation}
\mathbb{E}[S]
=
\frac{\beta}{\mu_D}
+
\sum_{i=1}^{n}
\frac{\beta_i+\beta\gamma_i}{\mu_{T_i}},
\label{eq:mean_service_general}
\end{equation}
and the effective load is therefore
\begin{equation}
\rho
=
\lambda\mathbb{E}[S]
=
\lambda
\left[
\frac{\beta}{\mu_D}
+
\sum_{i=1}^{n}
\frac{\beta_i+\beta\gamma_i}{\mu_{T_i}}
\right].
\label{eq:rho_general}
\end{equation}
The stationary analysis is restricted to the positive-recurrent
region $\rho<1$. For explicit analytical derivation, we first consider the
two-treatment case, $n=2$. In this case,
\begin{equation}
\lambda
=
\lambda_D+\lambda_{T_1}+\lambda_{T_2},
\end{equation}
and
\begin{equation}
\beta
=
\frac{\lambda_D}{\lambda},
\qquad
\beta_1
=
\frac{\lambda_{T_1}}{\lambda},
\qquad
\beta_2
=
\frac{\lambda_{T_2}}{\lambda},
\end{equation}
with
\begin{equation}
\beta+\beta_1+\beta_2=1,
\qquad
\gamma_1+\gamma_2=1.
\end{equation}
Let $N(t)$ denote the total number of patients in the system at time
$t$, including the patient in service, and let $J(t)$ denote the
service phase of the patient currently receiving service. For the
two-treatment case,
\[
J(t) =
\begin{cases}
D, & \text{diagnostic phase},\\
T_1, & \text{treatment mode }T_1,\\
T_2, & \text{treatment mode }T_2.
\end{cases}
\]
The process $X(t)=\bigl(N(t),J(t)\bigr)$ is a continuous-time Markov chain (CTMC) with state space
\[
\mathcal{S}
=
\{0\}
\cup
\bigl\{(k,j):k\geq1,\ j\in\{D,T_1,T_2\}\bigr\}.
\]
For $k \geq 1$, define
\[
p_{k,j}(t)
=
\mathbb{P}\bigl\{N(t)=k,\;J(t)=j\bigr\},
\]
and let
$p_{k,j} = \lim_{t \to \infty} p_{k,j}(t)$ denote the corresponding
stationary probabilities. The balance equations are derived from the transition structure shown
in Figure~\ref{fig:rate_transition}, and the
probability generating functions are subsequently used to obtain the
stationary queue-length distribution and key performance
measures.
\subsection{Queue size distribution}
\label{subsec:queue_size_distribution}
To evaluate the stationary probability, the
governing balance equations of the system are defined as follows:
\begin{equation}
\lambda\,p_{0}
= \mu_{T_1}\,p_{(1,T_{1})}
+ \mu_{T_2}\,p_{(1,T_{2})}.
\label{eq:boundary_balance}
\end{equation}
\subsection*{For diagnosis:}
\begin{equation}
(\lambda + \mu_{D})\,p_{(1,D)}
= \lambda\,\beta\,p_{0}
+ \mu_{T_1}\,\beta\,p_{(2,T_{1})}
+ \mu_{T_2}\,\beta\,p_{(2,T_{2})}
\label{diagnose:1}
\end{equation}
\begin{equation}
(\lambda + \mu_{D})\,p_{(k,D)}
= \lambda\,p_{(k-1,D)}
+ \mu_{T_1}\,\beta\,p_{(k+1,T_{1})}
+ \mu_{T_2}\,\beta\,p_{(k+1,T_{2})},
\quad k \ge 2
\label{diagnose:n}
\end{equation}
\subsection*{For treatment 1:}
\begin{equation}
(\lambda + \mu_{T_1})\,p_{(1,T_{1})}
= \lambda\,\beta_{1}\,p_{0}
+ \mu_{D}\,\gamma_{1}\,p_{(1,D)}
+ \mu_{T_1}\,\beta_{1}\,p_{(2,T_{1})}
+ \mu_{T_2}\,\beta_{1}\,p_{(2,T_{2})}
\label{treatment1:1}
\end{equation}
\begin{equation}
(\lambda + \mu_{T_1})\,p_{(k,T_{1})}
= \lambda\,p_{(k-1,T_{1})}
+ \mu_{D}\,\gamma_{1}\,p_{(k,D)}
+ \mu_{T_1}\,\beta_{1}\,p_{(k+1,T_{1})}
+ \mu_{T_2}\,\beta_{1}\,p_{(k+1,T_{2})},
\quad k \ge 2
\label{treatment1:n}
\end{equation}
\subsection*{For treatment 2:}
\begin{equation}
(\lambda + \mu_{T_2})\,p_{(1,T_{2})}
= \lambda\,\beta_{2}\,p_{0}
+ \mu_{D}\,\gamma_{2}\,p_{(1,D)}
+ \mu_{T_2}\,\beta_{2}\,p_{(2,T_{2})}
+ \mu_{T_1}\,\beta_{2}\,p_{(2,T_{1})}
\label{treatment2:1}
\end{equation}
\begin{equation}
(\lambda + \mu_{T_2})\,p_{(k,T_{2})}
= \lambda\,p_{(k-1,T_{2})}
+ \mu_{D}\,\gamma_{2}\,p_{(k,D)}
+ \mu_{T_2}\,\beta_{2}\,p_{(k+1,T_{2})}
+ \mu_{T_1}\,\beta_{2}\,p_{(k+1,T_{1})},
\quad k \ge 2
\label{treatment2:n}
\end{equation}
\subsubsection{PGF characterization}
\label{subsec:pgf_characterization}
To characterize the stationary queue-length distribution, define the probability generating functions
\begin{equation}
P_D(z)
=
\sum_{k=1}^{\infty}p_{(k,D)}z^k,
\qquad
P_{T_i}(z)
=
\sum_{k=1}^{\infty}p_{(k,T_i)}z^k,
\quad i=1,2,
\label{eq:phase_pgfs}
\end{equation}
for $|z|<1$. Their values at $z=1$, give
the stationary probabilities that the server is operating in the
corresponding service phase. Multiplying the balance equations by the appropriate
powers of $z$ and summing over the queue length yields
\begin{align}
A_0(z)P_D(z)
&=
C_0(z)p_0
+
L_1P_{T_1}(z)
+
L_2P_{T_2}(z),
\label{eq:pgf_D_compact}
\\
A_1(z)P_{T_1}(z)
&=
C_1(z)p_0
+
R_1(z)P_D(z)
+
K_1P_{T_2}(z),
\label{eq:pgf_T1_compact}
\\
A_2(z)P_{T_2}(z)
&=
C_2(z)p_0
+
R_2(z)P_D(z)
+
K_2P_{T_1}(z),
\label{eq:pgf_T2_compact}
\end{align}
where
\begin{align}
A_0(z)
&=
z\lambda(1-z)+z\mu_D,
&
C_0(z)
&=
z\lambda\beta(z-1),
\nonumber\\
A_1(z)
&=
z\lambda(1-z)+\mu_{T_1}(z-\beta_1),
&
C_1(z)
&=
z\lambda\beta_1(z-1),
\nonumber\\
A_2(z)
&=
z\lambda(1-z)+\mu_{T_2}(z-\beta_2),
&
C_2(z)
&=
z\lambda\beta_2(z-1),
\label{eq:pgf_coefficients_1}
\\
R_1(z)
&=
z\mu_D\gamma_1,
&
R_2(z)
&=
z\mu_D\gamma_2,
\nonumber\\
L_1
&=
\mu_{T_1}\beta,
&
L_2
&=
\mu_{T_2}\beta,
\nonumber\\
K_1
&=
\mu_{T_2}\beta_1,
&
K_2
&=
\mu_{T_1}\beta_2.
\label{eq:pgf_coefficients_2}
\end{align}
Equations~\eqref{eq:pgf_D_compact}--\eqref{eq:pgf_T2_compact}
can be represented as the linear system
\begin{equation}
\mathbf{M}(z)\mathbf{P}(z)
=
p_0\mathbf{C}(z),
\label{eq:pgf_matrix_system}
\end{equation}
where
\begin{equation}
\mathbf{P}(z)
=
\begin{pmatrix}
P_D(z)\\
P_{T_1}(z)\\
P_{T_2}(z)
\end{pmatrix},
\qquad
\mathbf{C}(z)
=
\begin{pmatrix}
C_0(z)\\
C_1(z)\\
C_2(z)
\end{pmatrix},
\label{eq:pgf_vectors}
\end{equation}
and
\begin{equation}
\mathbf{M}(z)
=
\begin{pmatrix}
A_0(z) & -L_1 & -L_2\\
-R_1(z) & A_1(z) & -K_1\\
-R_2(z) & -K_2 & A_2(z)
\end{pmatrix}.
\label{eq:pgf_coefficient_matrix}
\end{equation}
The matrix representation separates the queue length dynamics and provides a basis for both the
closed-form analysis for $n=2$ and the numerical
extension to an arbitrary $n$.
\begin{theorem}
\label{thm:stationary_pgfs}
The PGFs corresponding to the number of patients during diagnosis, treatment 1 and treatment 2 are explicitly obtained as
\begin{align}
P_D(z)
&=
\frac{
\beta\lambda z
\bigl(\lambda+\mu_{T_1}-\lambda z\bigr)
\bigl(\lambda+\mu_{T_2}-\lambda z\bigr)
}{
H(z)
}\,p_0,
\label{eq:PD_closed}
\\[1ex]
P_{T_1}(z)
&=
\frac{
\lambda z
\bigl(\lambda+\mu_{T_2}-\lambda z\bigr)
\left[
\beta_1\bigl(\lambda+\mu_D-\lambda z\bigr)
+
\beta\gamma_1\mu_D
\right]
}{
H(z)
}\,p_0,
\label{eq:PT1_closed}
\\[1ex]
P_{T_2}(z)
&=
\frac{
\lambda z
\bigl(\lambda+\mu_{T_1}-\lambda z\bigr)
\left[
\beta_2\bigl(\lambda+\mu_D-\lambda z\bigr)
+
\beta\gamma_2\mu_D
\right]
}{
H(z)
}\,p_0,
\label{eq:PT2_closed}
\end{align}
where, on defining $x(z)=\lambda(1-z)$, the common denominator is
\begin{align}
H(z)
={}&
x(z)^3
+
\left(
\mu_D+\mu_{T_1}+\mu_{T_2}-\lambda
\right)x(z)^2
\nonumber\\
&+
\Big[
\mu_D\mu_{T_1}
+
\mu_D\mu_{T_2}
+
\mu_{T_1}\mu_{T_2}
\nonumber\\
&\hspace{10mm}
-\lambda
\bigl\{
\mu_D
+
\mu_{T_1}(1-\beta_1)
+
\mu_{T_2}(1-\beta_2)
\bigr\}
\Big]x(z)
\nonumber\\
&+
\mu_D\mu_{T_1}\mu_{T_2}p_0.
\label{eq:H_denominator}
\end{align}
\end{theorem}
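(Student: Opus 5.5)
The plan is to treat the linear system \eqref{eq:pgf_matrix_system} as a $3\times 3$ system with polynomial coefficients and solve it by Cramer's rule:
\[
P_D(z)=\frac{\det\mathbf{M}_D(z)}{\det\mathbf{M}(z)}\,p_0,\qquad
P_{T_i}(z)=\frac{\det\mathbf{M}_{T_i}(z)}{\det\mathbf{M}(z)}\,p_0,
\]
where $\mathbf{M}_j(z)$ denotes $\mathbf{M}(z)$ with the column corresponding to phase $j$ replaced by $\mathbf{C}(z)$. Before expanding anything, I will note two structural facts that drive the simplification.

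The first fact is that every entry of $\mathbf{C}(z)$ carries the common factor $z\lambda(z-1)$, so each Cramer numerator equals $z\lambda(z-1)$ times a determinant whose first replaced column is $(\beta,\beta_1,\beta_2)^{\top}$. The second fact is that $\mathbf{M}(1)$ is singular. Its first column is $(\mu_D,-\mu_D\gamma_1,-\mu_D\gamma_2)^{\top}$, which sums to zero because $\gamma_1+\gamma_2=1$. Its second and third columns also sum to zero, because $A_1(1)=\mu_{T_1}(1-\beta_1)=\mu_{T_1}(\beta+\beta_2)$. Consequently $(1,1,1)\mathbf{M}(1)=\mathbf{0}$, so $\det\mathbf{M}(z)$ has a factor $(z-1)$. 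I will cancel this factor against the $(z-1)$ in the numerators, which removes the apparent singularity at $z=1$.

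Next I will rewrite all entries through $x=\lambda(1-z)$. After the row operation of adding rows $2$ and $3$ to row $1$, the diagonal entries become $A_0=z(x+\mu_D)$ and $A_i=zx+\mu_{T_i}(z-\beta_i)$. This row operation produces the column sums, each of which is a multiple of $(1-z)$ or of $x$. Expanding the numerators should then reproduce the factored forms in the statement. For $P_D$, the numerator is the product $(x+\mu_{T_1})(x+\mu_{T_2})$ times $\beta\lambda z$. For $P_{T_1}$, it is $(x+\mu_{T_2})\bigl[\beta_1(x+\mu_D)+\beta\gamma_1\mu_D\bigr]$ times $\lambda z$, and the analogous expression holds for $P_{T_2}$. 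Any leftover powers of $z$ common to the numerator and denominator will be cancelled at this stage.

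Finally, I will identify the reduced denominator as a cubic in $x$. The coefficients of $x^3$, $x^2$ and $x$ should come out as displayed in \eqref{eq:H_denominator} after collecting terms and using $\beta+\beta_1+\beta_2=1$. The constant term is the value of the reduced denominator at $z=1$. I will evaluate it through the normalization $p_0+P_D(1)+P_{T_1}(1)+P_{T_2}(1)=1$, together with the boundary equation \eqref{eq:boundary_balance}. This shows that the constant term equals $\mu_D\mu_{T_1}\mu_{T_2}(1-\rho)$, with $\rho$ given by \eqref{eq:rho_general}. Writing it as $\mu_D\mu_{T_1}\mu_{T_2}p_0$ then uses $p_0=1-\rho$.

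The main obstacle is the algebra in this last step: verifying that the quotient $\det\mathbf{M}(z)/(z-1)$, once expressed in $x$, collapses exactly to the stated cubic. In particular, the $z$-dependence hidden in the terms $\mu_{T_i}(z-\beta_i)$ must be absorbed consistently; these entries lack the overall factor $z$ present in $A_0$. I would first try to handle this by factoring $z$ out of suitable rows so that everything is polynomial in $x$, and fall back on checking the coefficient identities at a few values of $z$ if direct expansion becomes unwieldy.
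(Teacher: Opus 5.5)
Your overall route is the paper's: Cramer's rule on $\mathbf{M}(z)\mathbf{P}(z)=p_0\mathbf{C}(z)$, extraction of $z^2(z-1)$ from $\Delta(z)=\det\mathbf{M}(z)$, and cancellation against the numerators. The left null vector $(1,1,1)$ of $\mathbf{M}(1)$ is a clean way to see the factor $z-1$.

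The two places you leave as ``should'' close quickly. First, $\mathbf{M}(0)$ has a zero first column, and its other two columns are $-\mu_{T_1}(\beta,\beta_1,\beta_2)^{\top}$ and $-\mu_{T_2}(\beta,\beta_1,\beta_2)^{\top}$. It therefore has rank one, so $z^2\mid\Delta(z)$. Second, in each Cramer numerator, add $\mu_{T_i}$ times the replaced column $(\beta,\beta_1,\beta_2)^{\top}$ to each remaining treatment column $T_i$. This turns $A_i$ into $z(x+\mu_{T_i})$ and zeros the rest of that column. Every numerator then factors at once as $\lambda z^3(z-1)$ times the corresponding bracketed factors in the statement.

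The step that does not work as planned is the constant term. $H(1)$ is a fixed polynomial in the parameters, namely $\Delta'(1)$. Normalization, by contrast, reads $p_0\bigl(1+\lambda E_0/H(1)\bigr)=1$ with $E_0=\beta\mu_{T_1}\mu_{T_2}+\omega_1\mu_D\mu_{T_2}+\omega_2\mu_D\mu_{T_1}$. That is one equation in the two unknowns $H(1)$ and $p_0$. You do not say what second relation at $z=1$ the boundary equation would supply, and that equation is already built into $\mathbf{C}(z)$. Extracting $H(1)=\mu_D\mu_{T_1}\mu_{T_2}(1-\rho)$ from normalization therefore presupposes $p_0=1-\rho$. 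In this derivation, that is exactly what normalization is supposed to deliver once $H(1)$ is known, so the step is circular.

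Compute $H(1)$ directly instead. Your row operation gives $\Delta(z)=\lambda(1-z)\det\mathbf{M}'(z)$, where $\mathbf{M}'$ has first row $(z,\,z-\mu_{T_1}/\lambda,\,z-\mu_{T_2}/\lambda)$. Hence $H(1)=-\lambda\det\mathbf{M}'(1)$. Write $-\lambda$ times that row at $z=1$ as $(0,\mu_{T_1},\mu_{T_2})-\lambda(1,1,1)$ and expand by linearity. This gives $H(1)=\mu_D\mu_{T_1}\mu_{T_2}(\gamma_1+\gamma_2)-\lambda E_0=\mu_D\mu_{T_1}\mu_{T_2}(1-\rho)$.

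Alternatively, the relation that does close things at $z=1$ is the throughput identity $\mu_{T_1}P_{T_1}(1)+\mu_{T_2}P_{T_2}(1)=\lambda$. It comes from summing all level-crossing balances, not just the level-$0$ one, and gives $H(1)=\mu_D\mu_{T_1}\mu_{T_2}p_0$ at once. Either way, normalization then yields $p_0=1-\rho$, and only that licenses writing the constant term as $\mu_D\mu_{T_1}\mu_{T_2}p_0$.
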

\begin{proof}
The determinant of the coefficient matrix in
\eqref{eq:pgf_coefficient_matrix} is
\begin{align}
\Delta(z)
={}&
A_0A_1A_2
-
A_0K_1K_2
-
A_2L_1R_1
-
A_1L_2R_2
\nonumber\\
&-
K_1L_1R_2
-
K_2L_2R_1.
\label{eq:pgf_determinant}
\end{align}
Using $\beta+\beta_1+\beta_2=1$ and
$\gamma_1+\gamma_2=1$, direct factorization gives
\begin{equation}
\Delta(z)=z^2(z-1)H(z).
\label{eq:determinant_factorization}
\end{equation}
Applying Cramer's rule to \eqref{eq:pgf_matrix_system}, factorizing the
three numerator determinants, and cancelling the common factor
$z^2(z-1)$ yields
\eqref{eq:PD_closed}--\eqref{eq:PT2_closed}.
\end{proof}
\subsection{Performance measures}
\label{sec:performance-measures}
We now derive the principal congestion, delay, throughput,
and stability measures of the proposed model. We define the proportions of patients requiring treatment modes $T_1$ and $T_2$ as $\omega_1=\beta_1+\beta\gamma_1$, $\omega_2=\beta_2+\beta\gamma_2$. Since $\beta+\beta_1+\beta_2=1$
and $\gamma_1+\gamma_2=1$, it follows that $\omega_1+\omega_2=1$.
The first two moments of the service requirement $S$ of an arbitrary
external patient are
\begin{equation}
m_1
\equiv
\mathbb{E}[S]
=
\frac{\beta}{\mu_D}
+
\frac{\omega_1}{\mu_{T_1}}
+
\frac{\omega_2}{\mu_{T_2}},
\label{eq:first_service_moment}
\end{equation}
and
\begin{align}
m_2
\equiv
\mathbb{E}[S^2]
={}&
\frac{2\beta}{\mu_D^2}
+
\frac{2\beta\gamma_1}{\mu_D\mu_{T_1}}
+
\frac{2\beta\gamma_2}{\mu_D\mu_{T_2}}
\nonumber\\
&+
\frac{2\omega_1}{\mu_{T_1}^2}
+
\frac{2\omega_2}{\mu_{T_2}^2}.
\label{eq:second_service_moment}
\end{align}
The effective load is therefore
\begin{equation}
\rho
=
\lambda m_1
=
\lambda
\left(
\frac{\beta}{\mu_D}
+
\frac{\beta_1+\beta\gamma_1}{\mu_{T_1}}
+
\frac{\beta_2+\beta\gamma_2}{\mu_{T_2}}
\right).
\label{eq:effective_load_performance}
\end{equation}
Under $\rho<1$, the empty-system probability satisfies $p_0=1-\rho$.
\subsubsection{Expected system size}
The expected number of patients in the system, including any patient
in service, is obtained from the derivatives of the PGFs:
\[
L
=
P_D'(1)
+
P_{T_1}'(1)
+
P_{T_2}'(1).
\]
This leads to the closed form expression
\begin{equation}
L
=
\rho
+
\frac{\lambda^2m_2}{2(1-\rho)}.
\label{eq:L_compact}
\end{equation}
Furthermore, the mean waiting time, excluding service, is
\begin{equation}
W_q
=
\frac{\lambda m_2}{2(1-\rho)}.
\label{eq:mean_queue_wait}
\end{equation}
Consequently, the mean sojourn time including both queueing delay and service
can be given as
\begin{equation}
W
=
m_1+W_q
=
m_1+
\frac{\lambda m_2}{2(1-\rho)}.
\label{eq:mean_sojourn_time}
\end{equation}
Accordingly, Little's law gives $L=\lambda W$.
Equations~\eqref{eq:L_compact}--\eqref{eq:mean_sojourn_time} are also
obtained independently from the P-K moment formula
for the equivalent $M/PH/1$ representation.
\subsubsection{Phase occupancies and throughput}
Evaluating the PGFs at $z=1$ yields the treatment completion rate and the rate of completed service phases, defined as
\begin{align}
TH
&=
\mu_{T_1}P_{T_1}(1)
+
\mu_{T_2}P_{T_2}(1) = \lambda.
\label{eq:patient_throughput}
\end{align}
\begin{align}
TH_{\mathrm{phase}}
&=
\mu_D P_D(1)
+
\mu_{T_1}P_{T_1}(1)
+
\mu_{T_2}P_{T_2}(1)
\nonumber\\
&=
\lambda(1+\beta).
\label{eq:phase_completion_intensity}
\end{align}
$TH$ counts completed patients, whereas $TH_{\mathrm{phase}}$ counts all completed diagnostic and treatment phases.
\subsubsection{Stability boundary}
To investigate the stability boundary, the effective utilization $\rho$ is computed as
\begin{equation}
\rho
=
\frac{\lambda_D}{\mu_D}
+
\frac{\lambda_{T_1}+\gamma_1\lambda_D}{\mu_{T_1}}
+
\frac{\lambda_{T_2}+\gamma_2\lambda_D}{\mu_{T_2}}.
\label{eq:rho_primitive_rates}
\end{equation}
The stationary distribution exists if and only if $\rho<1$. Solving $\rho(\lambda)=1$ yields the critical total arrival rate as
\begin{equation}
\lambda^*
=
\left(
\frac{\beta}{\mu_D}
+
\frac{\beta_1+\beta\gamma_1}{\mu_{T_1}}
+
\frac{\beta_2+\beta\gamma_2}{\mu_{T_2}}
\right)^{-1}.
\label{eq:critical_arrival_rate}
\end{equation}
For arrival rates varying independently rather than proportionally,
the relevant stability boundary is the multidimensional condition
given in Equation~\eqref{eq:rho_primitive_rates}, rather than a single
scalar value $\lambda^*$.
\subsection{Numerical illustration}
\label{subsec:analytical_benchmark}
In this subsection, the analytical performance measures
under progressively increasing arrival intensities are computed (in Table~\ref{tab:validation}). Consequently,
the patient-mix probabilities remain constant, whereas the effective
load $\rho$ increases monotonically from S1 to S8 and approaches the
stability boundary. In all scenarios, the service rates are fixed at
$\mu_D=8$, $\mu_{T_1}=5$, and $\mu_{T_2}=7$, while the routing
probabilities are set to $\gamma_1=0.6$ and $\gamma_2=0.4$. The arrival rates satisfy $\lambda_{T_1}=0.3\lambda_D$,
$\lambda_{T_2}=0.4\lambda_D$. Thus, the patient-mix probabilities are
\[
\beta=\frac{10}{17},
\qquad
\beta_1=\frac{3}{17},
\qquad
\beta_2=\frac{4}{17}.
\]
\begin{table}[htbp]
\centering
\caption{Analytical performance under increasing traffic intensity.}
\label{tab:validation}
\small
\setlength{\tabcolsep}{4.5pt}
\begin{tabular}{c c c c c c c c c}
\toprule
\textbf{Set}
& $\lambda_D$
& $\lambda_{T_1}$
& $\lambda_{T_2}$
& $\lambda$
& $\rho$
& $L$
& $W$
& $TH$ \\
\midrule
S1 & 1.000 & 0.300 & 0.400 & 1.700 & 0.419 & 0.683 & 0.402 & 1.700 \\
S2 & 1.300 & 0.390 & 0.520 & 2.210 & 0.545 & 1.114 & 0.504 & 2.210 \\
S3 & 1.600 & 0.480 & 0.640 & 2.720 & 0.671 & 1.862 & 0.685 & 2.720 \\
S4 & 1.900 & 0.570 & 0.760 & 3.230 & 0.797 & 3.516 & 1.088 & 3.230 \\
S5 & 2.200 & 0.660 & 0.880 & 3.740 & 0.922 & 10.479 & 2.802 & 3.740 \\
S6 & 2.250 & 0.675 & 0.900 & 3.825 & 0.943 & 14.641 & 3.828 & 3.825 \\
S7 & 2.290 & 0.687 & 0.916 & 3.893 & 0.960 & 21.123 & 5.426 & 3.893 \\
S8 & 2.320 & 0.696 & 0.928 & 3.944 & 0.973 & 31.217 & 7.915 & 3.944 \\
\bottomrule
\end{tabular}
\end{table}
The values reported in
Table~\ref{tab:validation} satisfy Little's law, $L=\lambda W$, and
the steady-state flow-conservation identity, $TH=\lambda$. Both $L$
and $W$ increase sharply as $\rho$ approaches one, whereas $TH$
increases only proportionally with the external arrival rate. This
behavior reflects the reduction in available capacity slack near the
stability boundary.
\subsection{Numerical solution for $n>2$}
\label{subsec:general_n}
The two-treatment structure extends to arbitrary $n$ through a
matrix-analytic representation of the underlying $M/PH/1$ queue. Let
\begin{equation}
\boldsymbol{\alpha}=(\beta,\beta_1,\ldots,\beta_n),
\qquad
\mathbf{S}=
\begin{pmatrix}
-\mu_D & \mu_D\gamma_1 & \cdots & \mu_D\gamma_n\\
0      & -\mu_{T_1}    & \cdots & 0\\
\vdots & \vdots        & \ddots & \vdots\\
0      & 0             & \cdots & -\mu_{T_n}
\end{pmatrix},
\qquad
\mathbf{t}=-\mathbf{S}\mathbf{1},
\label{eq:general_PH_triple}
\end{equation}
denote the phase-type triple, with $\beta+\sum_{i}\beta_i=1$ and
$\sum_{i}\gamma_i=1$. The first two service moments extend
Equations~\eqref{eq:first_service_moment}
and~\eqref{eq:second_service_moment} to
\begin{equation}
m_1=\frac{\beta}{\mu_D}+\sum_{i=1}^{n}\frac{\beta_i+\beta\gamma_i}{\mu_{T_i}},
\qquad
m_2=\frac{2\beta}{\mu_D^2}
+ 2\beta\sum_{i=1}^{n}\frac{\gamma_i}{\mu_D\mu_{T_i}}
+ 2\sum_{i=1}^{n}\frac{\beta_i+\beta\gamma_i}{\mu_{T_i}^{2}},
\label{eq:general_moments}
\end{equation}
so that the aggregate measures $p_0$, $L$, $W$, and $TH$ under
$\rho=\lambda m_1<1$ follow directly from the Pollaczek--Khintchine
relations of Section~\ref{sec:performance-measures}
(Equations~\eqref{eq:L_compact}--\eqref{eq:mean_sojourn_time} and
\eqref{eq:patient_throughput}).
The complete stationary queue-size distribution is obtained by
treating the system as a quasi-birth-and-death (QBD) process with
block-tridiagonal generator
\begin{equation}
\mathbf{A}_{+}=\lambda\mathbf{I}_{n+1},
\qquad
\mathbf{A}_{0}=\mathbf{S}-\lambda\mathbf{I}_{n+1},
\qquad
\mathbf{A}_{-}=\mathbf{t}\boldsymbol{\alpha},
\label{eq:general_QBD_blocks}
\end{equation}
and letting $\mathbf{R}$ denote the minimal nonnegative solution of
$\mathbf{A}_{+}+\mathbf{R}\mathbf{A}_{0}+\mathbf{R}^{2}\mathbf{A}_{-}
=\mathbf{0}$. The stationary level vectors are then
\begin{equation}
\boldsymbol{\pi}_k=\boldsymbol{\pi}_1\mathbf{R}^{k-1},
\qquad
\Pr\{N=k\}=\boldsymbol{\pi}_1\mathbf{R}^{k-1}\mathbf{1},
\qquad k\geq1,
\label{eq:matrix_geometric}
\end{equation}
with $p_0$ and $\boldsymbol{\pi}_1$ determined from the boundary
equations and normalization. Algorithm~\ref{alg:general_n} summarizes
the computational procedure. The closed-form PGF derivation of
Section~\ref{subsec:pgf_characterization} is recovered as the special
case $n=2$.
\begin{algorithm}[htbp]
\caption{Matrix analytic solution for arbitrary $n$.}
\label{alg:general_n}
\begin{algorithmic}[1]
\Require $n$, $\lambda$, $\beta$, $\{\beta_i\}$, $\{\gamma_i\}$,
$\mu_D$, $\{\mu_{T_i}\}$, tolerance $\epsilon$
\Ensure $p_0$, $\{\boldsymbol{\pi}_k\}$, $L$, $W$, $TH$
\State Assemble $\boldsymbol{\alpha}$, $\mathbf{S}$,
$\mathbf{t}=-\mathbf{S}\mathbf{1}$; set
$(\mathbf{A}_{+},\mathbf{A}_{0},\mathbf{A}_{-})=
(\lambda\mathbf{I},\mathbf{S}-\lambda\mathbf{I},
\mathbf{t}\boldsymbol{\alpha})$.
\State Solve
$\mathbf{A}_{+}+\mathbf{R}\mathbf{A}_{0}+\mathbf{R}^{2}\mathbf{A}_{-}
=\mathbf{0}$ for the minimal nonnegative $\mathbf{R}$.
\State Obtain $p_0$ and $\boldsymbol{\pi}_1$ from $-\lambda p_0+\boldsymbol{\pi}_1\mathbf{t}=0$ and $\lambda p_0\boldsymbol{\alpha}+\boldsymbol{\pi}_1A_0+\boldsymbol{\pi}_1RA_-=0$, together with $p_0+\boldsymbol{\pi}_1(I-R)^{-1}\mathbf{1}=1$.
\State Compute $L=\boldsymbol{\pi}_1(\mathbf{I}-\mathbf{R})^{-2}
\mathbf{1}$, $W=L/\lambda$, and
$TH=\boldsymbol{\pi}_1(\mathbf{I}-\mathbf{R})^{-1}\mathbf{t}$.
\State Verify the residuals in the matrix-quadratic equation,
normalization, flow balance, and the first-moment identities.
\end{algorithmic}
\end{algorithm}
\subsection{Numerical validation}
\label{subsec:general_n_validation}
The matrix-analytic formulation is verified along three complementary
directions: cross-validation against two independent analytical
derivations, sample-path validation via discrete-event simulation,
and a structural experiment that disentangles the effect of treatment
heterogeneity from the effect of treatment multiplicity. The
distributional check and the heterogeneity experiment are summarized
graphically in Figure~\ref{fig:general_n_combined}(a) and (b),
respectively.
Unless stated otherwise, $\lambda=2.5$, $\beta=0.4$, $\mu_D=8$, and
the direct-referral and post-diagnostic routing probabilities are
balanced,
\begin{equation}
\beta_i=\frac{1-\beta}{n},
\qquad
\gamma_i=\frac{1}{n},
\qquad i=1,\ldots,n.
\label{eq:balanced_general_n_setting}
\end{equation}
Treatment heterogeneity is controlled by
\begin{equation}
\mu_{T_i}=\bar{\mu}_T(1+h x_i),
\qquad
x_i=-1+\frac{2(i-1)}{n-1},
\label{eq:general_n_treatment_rates}
\end{equation}
with baseline $\bar{\mu}_T=6$ and $h=0.4$. All configurations satisfy
$\rho<1$.
\subsubsection{Analytical consistency}
Because Algorithm~\ref{alg:general_n} reduces to the closed-form PGFs
at $n=2$ and to the P-K formula at any $n$, exact
agreement across the three routes is a stringent correctness test.
Table~\ref{tab:general_n_matrix_validation} reports the results.

\begin{figure}[htbp]
\centering
\begin{subfigure}[t]{0.49\textwidth}
    \centering
    \includegraphics[width=\linewidth]{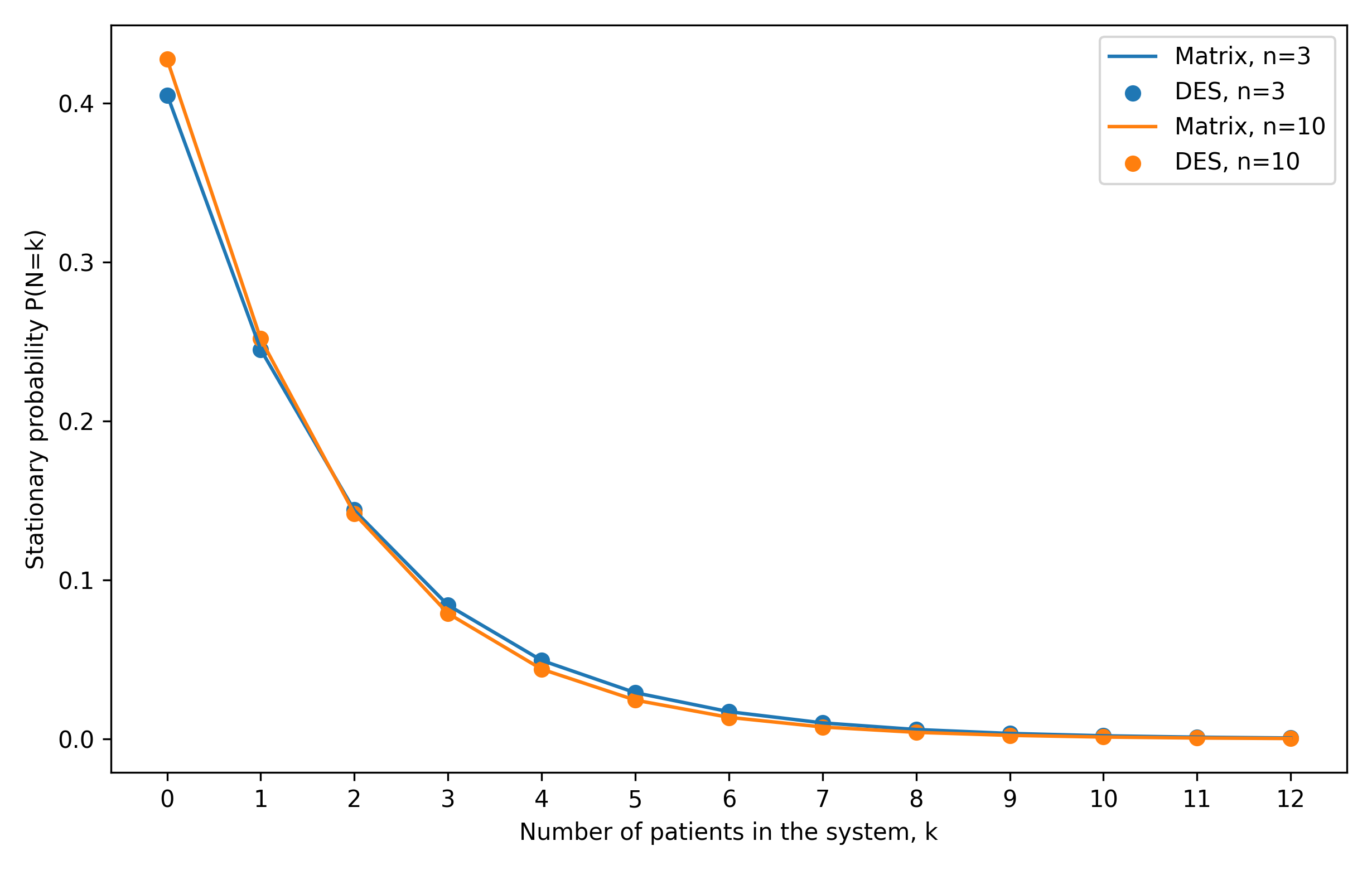}
    \caption{Queue-size distribution: matrix-analytic (line) versus
    DES (bars), $n=3$ and $n=10$.}
    \label{fig:general_n_pmf}
\end{subfigure}
\hfill
\begin{subfigure}[t]{0.49\textwidth}
    \centering
    \includegraphics[width=\linewidth]{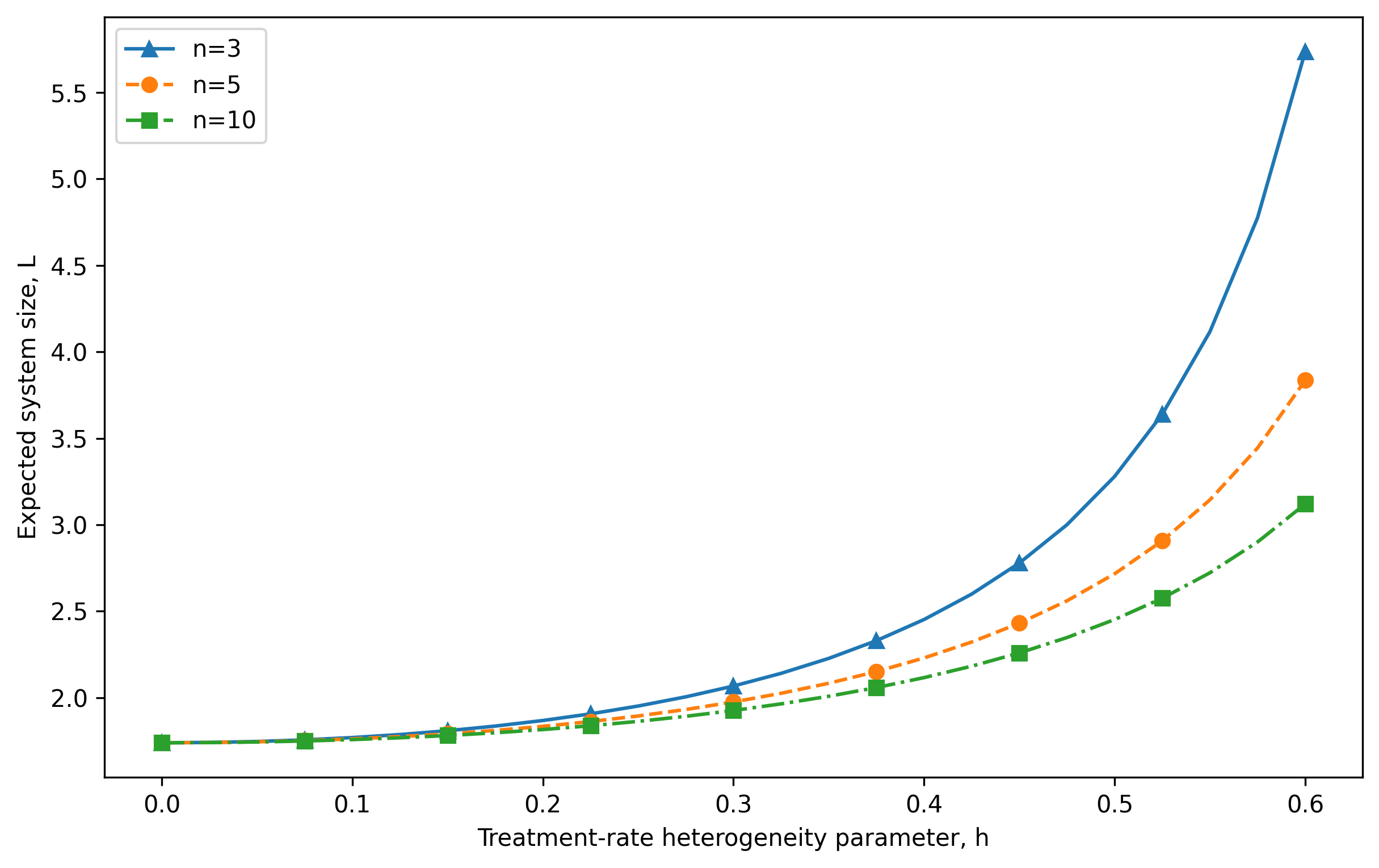}
    \caption{Expected system size $L$ against the heterogeneity
    parameter $h$ for $n\in\{3,5,10\}$.}
    \label{fig:general_n_heterogeneity}
\end{subfigure}
\caption{Distributional validation (a) and structural effect of
treatment-rate heterogeneity (b) for the arbitrary-$n$ model.}
\label{fig:general_n_combined}
\end{figure}
Algorithm~\ref{alg:general_n} reproduces the exact
two-treatment closed form. At $n>2$, the matrix-analytic and moment
results agree to $10^{-13}$, providing an internal consistency check
in the absence of a closed-form PGF baseline.
\begin{table}[htbp]
\centering
\caption{Cross-validation of PGF, matrix-analytic (MA), and
 P-K derivations.
$\varepsilon_L=|L_{\mathrm{MA}}-L_{\mathrm{PK}}|/L_{\mathrm{PK}}$;
$\varepsilon_{\mathrm{flow}}=|TH-\lambda|/\lambda$.}
\label{tab:general_n_matrix_validation}
\small
\resizebox{\textwidth}{!}{%
\begin{tabular}{c c c c c c c c c}
\toprule
$n$ & $\rho$ & $L_{\mathrm{PGF}}$ & $L_{\mathrm{MA}}$
& $L_{\mathrm{PK}}$ & $W_{\mathrm{MA}}$
& $W_{\mathrm{PK}}$ & $\varepsilon_L$
& $\varepsilon_{\mathrm{flow}}$ \\
\midrule
2  & 0.621032 & 1.640857 & 1.640857 & 1.640857
   & 0.656343 & 0.656343
   & $3.39\!\times\!10^{-13}$ & $1.07\!\times\!10^{-13}$ \\
3  & 0.594577 & --       & 1.447776 & 1.447776
   & 0.579110 & 0.579110
   & $3.23\!\times\!10^{-13}$ & $1.08\!\times\!10^{-13}$ \\
5  & 0.580357 & --       & 1.350623 & 1.350623
   & 0.540249 & 0.540249
   & $2.51\!\times\!10^{-13}$ & $8.63\!\times\!10^{-14}$ \\
10 & 0.572472 & --       & 1.299072 & 1.299072
   & 0.519629 & 0.519629
   & $2.49\!\times\!10^{-13}$ & $8.74\!\times\!10^{-14}$ \\
\bottomrule
\end{tabular}}
\end{table}

\subsection{Verification through DES}
\label{subsec:des_verification}
An event-driven simulator of the underlying CTMC is used for
$n\in\{3,5,10\}$, with 20 replications of $50{,}000$ time units each
after a $2{,}000$-unit warm-up. Because the simulator maintains its
own event queue rather than the matrix-geometric recursion, it
provides an independent numerical benchmark.
\begin{table}[htbp]
\centering
\caption{Analytical versus DES results for systems with more than two
treatment modes.}
\label{tab:general_n_des_validation}
\small
\begin{tabular}{c c c c c c}
\toprule
$n$ & Metric & Analytical & DES mean & 95\% CI & Error (\%) \\
\midrule
3  & $L$  & 1.447776 & 1.449404 & $[1.443011,\,1.455797]$ & 0.112 \\
3  & $W$  & 0.579110 & 0.579823 & $[0.577666,\,0.581980]$ & 0.123 \\
3  & $TH$ & 2.500000 & 2.499668 & $[2.496323,\,2.503013]$ & 0.013 \\
\midrule
5  & $L$  & 1.350623 & 1.353566 & $[1.347005,\,1.360127]$ & 0.218 \\
5  & $W$  & 0.540249 & 0.541152 & $[0.539024,\,0.543281]$ & 0.167 \\
5  & $TH$ & 2.500000 & 2.501230 & $[2.497766,\,2.504694]$ & 0.049 \\
\midrule
10 & $L$  & 1.299072 & 1.298844 & $[1.291355,\,1.306332]$ & 0.018 \\
10 & $W$  & 0.519629 & 0.519764 & $[0.517136,\,0.522392]$ & 0.026 \\
10 & $TH$ & 2.500000 & 2.498863 & $[2.496053,\,2.501673]$ & 0.045 \\
\bottomrule
\end{tabular}
\end{table}
Every analytical value lies within the corresponding 95\% confidence
interval, with maximum relative error $0.22\%$ (Table~\ref{tab:general_n_des_validation}).
Figure~\ref{fig:general_n_combined}(a) complements the first-moment
agreement with a distributional comparison: the empirical queue-size
histogram matches the matrix-geometric probabilities across the full
support, ruling out compensating errors in higher moments.
\section{Sensitivity analysis}
\label{sec:sensitivity}
To examine the sensitivity of the key performance measures, numerical
experiments are carried out using the corrected expressions for the
expected system size $L$ and the mean sojourn time $W$. The effects of
variations in the arrival and service parameters are illustrated in
Figures~\ref{fig:L_sensitivity_4x2_color} and~\ref{fig:W_sensitivity_4x2_color}. Unless varied in
a panel, the parameters are fixed at
$\lambda_D=1.5$, $\lambda_{T_1}=0.5$, $\lambda_{T_2}=0.7$,
$\mu_D=8$, $\mu_{T_1}=5$, $\mu_{T_2}=7$,
$\gamma_1=0.6$, and $\gamma_2=0.4$. All numerical evaluations are
restricted to parameter combinations satisfying the stability
condition $\rho<1$.
\paragraph{(i) \textbf{Impact of arrival rates}}
Figures~\ref{fig:L_sensitivity_4x2_color}(a--d) and
\ref{fig:W_sensitivity_4x2_color}(a--d) illustrate the effects of variations in the
external arrival rates
$(\lambda_D,\lambda_{T_1},\lambda_{T_2})$. Over the examined stable
ranges, an increase in any arrival rate increases the workload offered
to the pooled service channel and reduces the available capacity
slack. Consequently, both the expected system size and the mean
sojourn time increase.
The effect of $\lambda_D$ is comparatively strong because every new
patient requires a diagnostic phase followed by one of the treatment
phases. In contrast, a referred patient directly enters the prescribed
treatment mode. Nevertheless, increases in either referral stream also
raise the total workload. Figures~\ref{fig:L_sensitivity_4x2_color}(c--d) and
\ref{fig:W_sensitivity_4x2_color}(c--d) show that both $L$ and $W$ increase
monotonically with $\lambda_{T_1}$ and $\lambda_{T_2}$ under the
selected parameter configuration. Although a change in the referral
mix may alter the average service requirement, the congestion effect
dominates throughout the ranges considered.
As the effective load approaches the stability boundary, the factor
$1/(1-\rho)$ in the P-K expressions becomes large.
Consequently, the curves become increasingly steep at higher arrival
rates, and a relatively small additional increase in demand may
produce a substantial increase in congestion and delay.
\paragraph{(ii) \textbf{Impact of service rates}}
Figures~\ref{fig:L_sensitivity_4x2_color}(e--h) and
Figures~\ref{fig:W_sensitivity_4x2_color}(e--h) illustrate the effects of variations in the
service rates $(\mu_D,\mu_{T_1},\mu_{T_2})$. Increasing any service
rate reduces its contribution to the effective load and therefore
decreases both the expected system size and the mean sojourn time.
Figures~\ref{fig:L_sensitivity_4x2_color}(e--f) and
\ref{fig:W_sensitivity_4x2_color}(e--f) show that increasing the diagnostic rate
improves system performance for each of the treatment-rate
configurations. Similarly, Figures~\ref{fig:L_sensitivity_4x2_color}(g--h) and
\ref{fig:W_sensitivity_4x2_color}(g--h) show that increasing either treatment rate
reduces congestion and delay for every diagnostic-rate configuration.
Higher values of the complementary service rates shift the curves
downward because they provide additional capacity slack in the
remaining service phases.
The curves flatten as the service rates increase, indicating
diminishing marginal improvements from additional capacity. The
magnitude of the improvement depends on the proportion of patients
requiring the corresponding phase, its current service rate, and the
workload remaining in the other phases. Accordingly, the largest
improvement is generally obtained by increasing a service rate that
makes a relatively large contribution to the effective load,
particularly when the system initially operates close to the
stability boundary.
The relative benefit of increasing a service rate depends on its
phase-specific workload contribution.
\section{Cost Optimization}
\label{sec:cost_optimization}
We first develop a load-triggered phase-targeted rule that identifies
the service modes capable of attaining a prescribed utilization target
and returns the minimum feasible capacity increment in closed form
(Section~\ref{sec:load-triggered}). In Section~\ref{subsec:cost_convexity},
two regimes are considered: a coordinated \emph{joint} design in which
all phase rates may be selected simultaneously, and a restricted design
in which only one phase may be augmented during a planning period
(Section~\ref{subsec:TC_restricted}).
\begin{figure}
    \centering
    \includegraphics[width=0.9\linewidth]{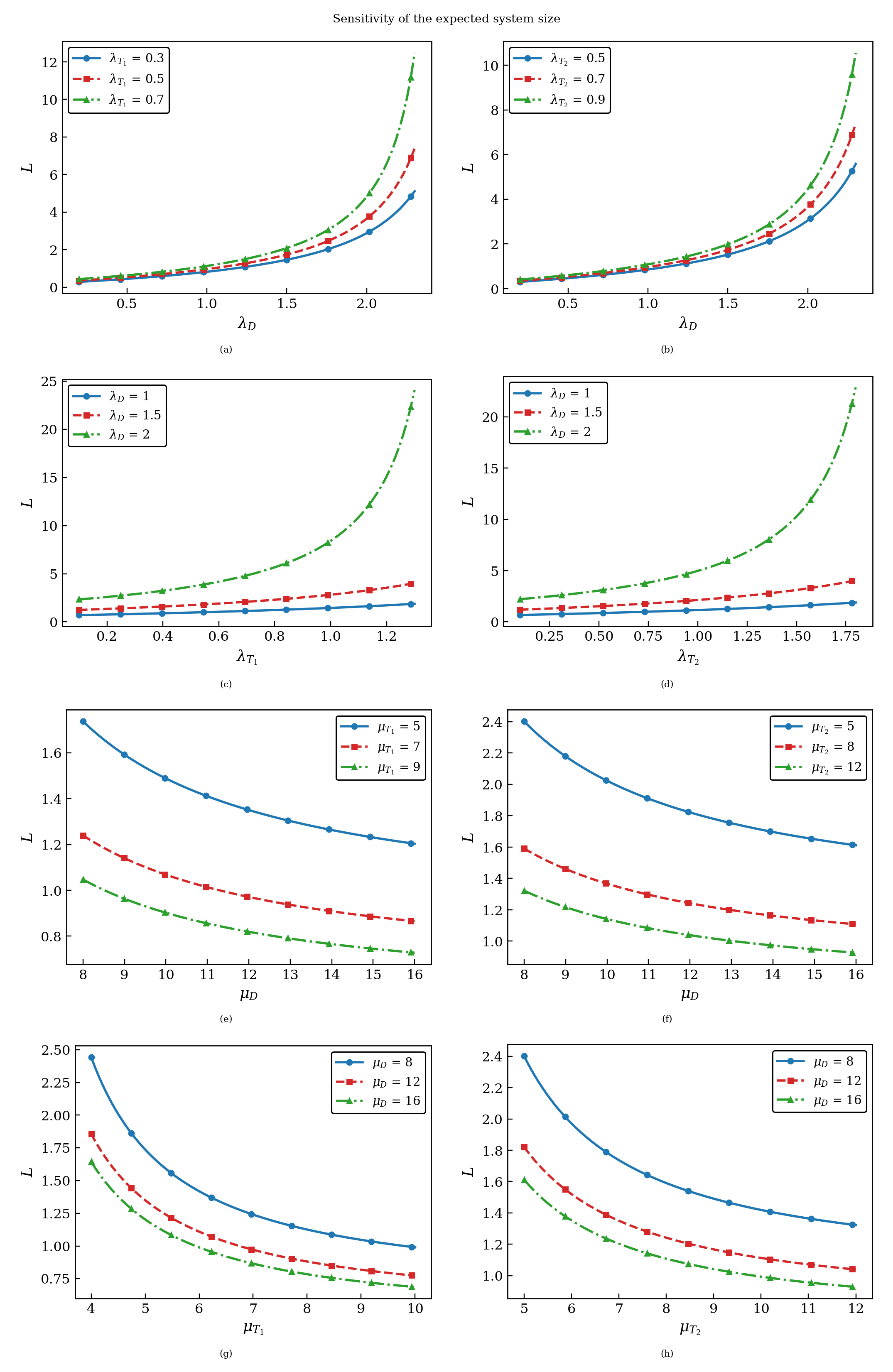}
    \caption{Variations in $L$ with respect to arrival rates
$\lambda_D$, $\lambda_{T_1}$, $\lambda_{T_2}$ (a--d) and
service rates $\mu_D$, $\mu_{T_1}$, $\mu_{T_2}$ (e--h), respectively.}
    \label{fig:L_sensitivity_4x2_color}
\end{figure}
\clearpage
\begin{figure}
    \centering
    \includegraphics[width=0.9\linewidth]{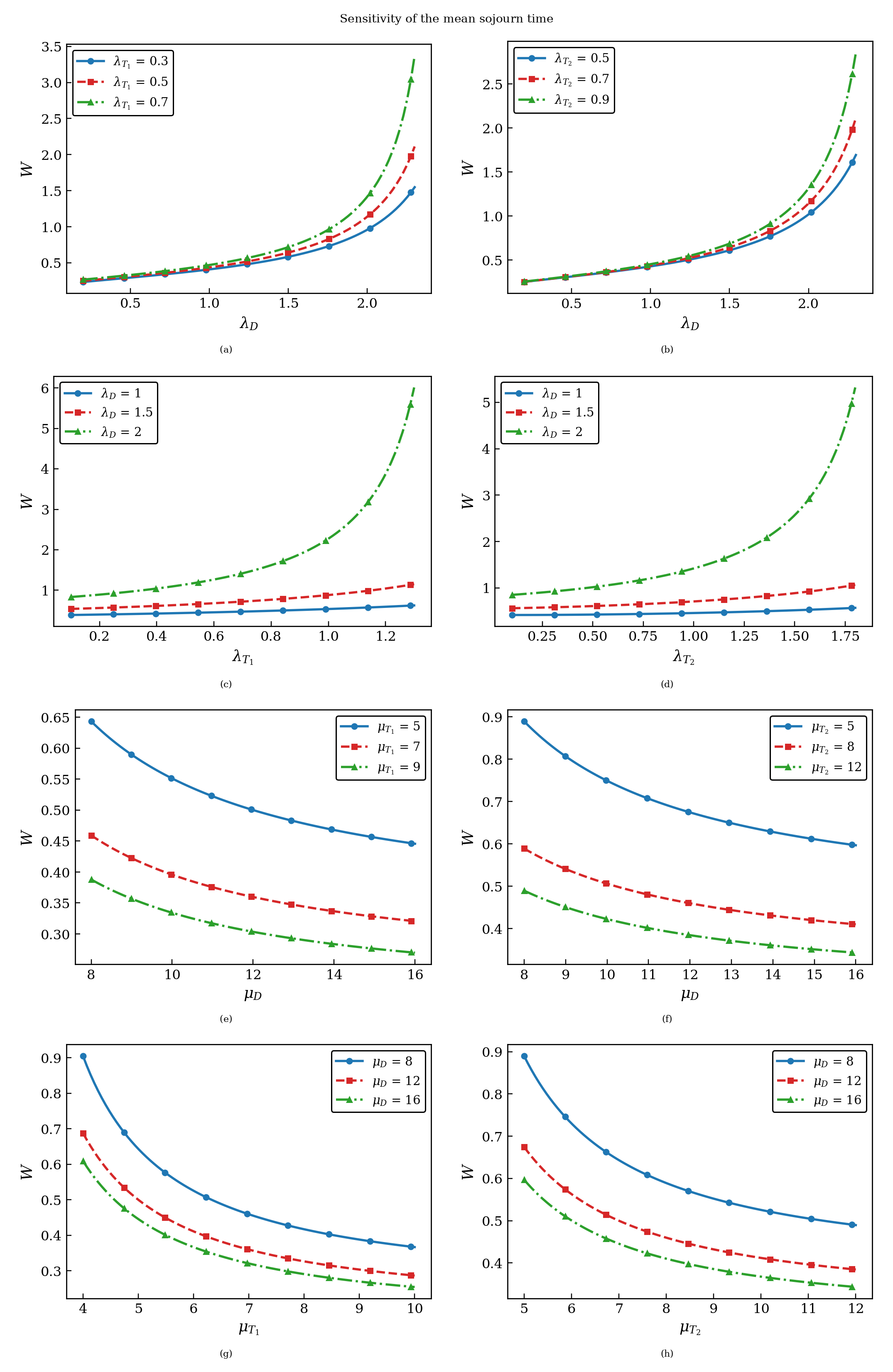}
    \caption{Variations in $W$ with respect to arrival rates
$\lambda_D$, $\lambda_{T_1}$, $\lambda_{T_2}$ (a--d) and
service rates $\mu_D$, $\mu_{T_1}$, $\mu_{T_2}$ (e--h), respectively.}
    \label{fig:W_sensitivity_4x2_color}
\end{figure}
\clearpage

\subsection{Load-triggered capacity-adjustment rule}
\label{sec:load-triggered}
The performance analysis shows that both
$L$ and $W$ grow rapidly as the effective load
$\rho$ approaches one. This motivates a load-triggered control that increases the service rate of the service mode most
capable of restoring the desired operating margin. The
procedure developed below identifies the appropriate mode from the
patient mix, routing probabilities, service rates, and capacity
costs.
Define the effective service-requirement weights
\begin{equation}
w_D=\beta,
\qquad
w_{T_i}=\beta_i+\beta\gamma_i,
\quad i=1,\ldots,n,
\label{eq:weights}
\end{equation}
and let
\[
\mathcal{J}=\{D,T_1,\ldots,T_n\}
\]
denote the set of service modes. Let $\mu_j^{(0)}$ denote the baseline
service rate of mode $j\in\mathcal{J}$. The baseline effective load admits the additive
workload decomposition
\begin{equation}
\rho^{(0)}
\;=\; \lambda \sum_{j \in \mathcal{J}} \frac{w_{j}}{\mu_{j}^{(0)}}
\;=\; \sum_{j \in \mathcal{J}} B_{j},
\qquad
B_{j} \;=\; \lambda\,\frac{w_{j}}{\mu_{j}^{(0)}},
\label{eq:workload}
\end{equation}
where $B_{j}$ is the phase-specific workload contribution of mode
$j$. The mode with the largest value of $B_{j}$ is the
\emph{dominant workload component} under the prevailing parameter
configuration.
\subsubsection{Single-mode capacity adjustment}
Suppose the effective service rate of a selected mode
$j \in \mathcal{J}$ is increased by $\delta_{j} \ge 0$. Define the
residual workload contribution
\begin{equation}
A_{-j}
\;=\; \sum_{\substack{\ell \in \mathcal{J} \\ \ell \ne j}}
\frac{w_{\ell}}{\mu_{\ell}^{(0)}}.
\label{eq:residual}
\end{equation}
The resulting controlled effective load is
\begin{equation}
\rho_{j}^{(1)}(\delta_{j})
\;=\; \lambda \left[ A_{-j}
                  + \frac{w_{j}}{\mu_{j}^{(0)} + \delta_{j}} \right].
\label{eq:controlled-load}
\end{equation}
Let $\bar{\rho} \in (0,1)$ denote a prescribed target utilization.
\begin{proposition}\label{prop:capacity}
For a fixed arrival rate $\lambda>0$,
$\bar{\rho} \in (0,1)$, and a phase $j\in\mathcal{J}$ with $w_j>0$, single-mode capacity adjustment of mode
$j \in \mathcal{J}$ can achieve
$\rho_{j}^{(1)}(\delta_{j}) \le \bar{\rho}$
if and only if
\begin{equation}
\lambda\,A_{-j} \;<\; \bar{\rho}.
\label{eq:feasibility}
\end{equation}
Whenever \eqref{eq:feasibility} holds, the minimum nonnegative
capacity increment achieving the target is
\begin{equation}
\delta_{j,\bar{\rho}}
\;=\; \left[\,\frac{w_{j}}{\bar{\rho}/\lambda - A_{-j}} - \mu_{j}^{(0)}
\,\right]^{+},
\qquad
[x]^{+} = \max\{0, x\}.
\label{eq:delta-optimal}
\end{equation}
\end{proposition}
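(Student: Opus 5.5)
The plan is to treat the controlled load \eqref{eq:controlled-load} as a function of the single variable $\delta_j\in[0,\infty)$ and use its monotonicity. Since $w_j>0$ and $\mu_j^{(0)}>0$, the map
\[
\delta_j\;\longmapsto\;\rho_j^{(1)}(\delta_j)=\lambda A_{-j}+\frac{\lambda w_j}{\mu_j^{(0)}+\delta_j}
\]
is continuous and strictly decreasing on $[0,\infty)$. At $\delta_j=0$ it equals $\rho^{(0)}$ from \eqref{eq:workload}, and as $\delta_j\to\infty$ it tends to $\lambda A_{-j}$. Moreover, $\lambda A_{-j}$ is never attained, because the term $\lambda w_j/(\mu_j^{(0)}+\delta_j)$ stays strictly positive. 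The range of the map is therefore exactly the half-open interval $(\lambda A_{-j},\,\rho^{(0)}]$. Everything in the proposition will be read off from this description.

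For the equivalence, I argue both directions directly. For necessity, if some $\delta_j\ge 0$ gives $\rho_j^{(1)}(\delta_j)\le\bar\rho$, then $\lambda A_{-j}<\rho_j^{(1)}(\delta_j)\le\bar\rho$ by strict positivity of the $j$-th term, which is \eqref{eq:feasibility}. For sufficiency, assume $\lambda A_{-j}<\bar\rho$. Then $\bar\rho/\lambda-A_{-j}>0$, so I may divide by it and rearrange $\rho_j^{(1)}(\delta_j)\le\bar\rho$ into the equivalent inequality
\[
\mu_j^{(0)}+\delta_j\;\ge\;\frac{w_j}{\bar\rho/\lambda-A_{-j}}.
\]
Here I use that both sides of the intermediate inequality $w_j/(\mu_j^{(0)}+\delta_j)\le \bar\rho/\lambda-A_{-j}$ are positive, so taking reciprocals reverses it. This inequality is satisfied by all sufficiently large $\delta_j$, which proves feasibility.

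For the minimal increment, the feasible set of $\delta_j\ge0$ is, by the displayed inequality, the set $[\,w_j/(\bar\rho/\lambda-A_{-j})-\mu_j^{(0)},\infty)\cap[0,\infty)$. Its least element is the positive part in \eqref{eq:delta-optimal}. The two cases of $[\cdot]^+$ correspond exactly to whether the target is already met at baseline ($\rho^{(0)}\le\bar\rho$, giving $\delta=0$) or not. The only delicate point is the sign bookkeeping when inverting the inequality, which needs $\bar\rho/\lambda-A_{-j}>0$; this is precisely where \eqref{eq:feasibility} enters. I also note that the condition must be strict, since the infimum $\lambda A_{-j}$ is never attained.
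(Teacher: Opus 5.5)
Your proof is correct and follows essentially the same route as the paper: continuity and strict monotonicity of $\delta_j\mapsto\rho_j^{(1)}(\delta_j)$, with the unattained limit $\lambda A_{-j}$, give the feasibility equivalence, and solving for $\mu_j^{(0)}+\delta_j$ and taking the positive part gives \eqref{eq:delta-optimal}. Your version is slightly more careful than the paper's, since you solve the inequality rather than the equation (which makes minimality explicit) and you justify why the feasibility condition must be strict.
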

\begin{proof}
The mapping $\delta_{j} \mapsto \rho_{j}^{(1)}(\delta_{j})$ defined in
\eqref{eq:controlled-load} is strictly decreasing and continuous on
$[0,\infty)$ with
\[
\lim_{\delta_{j} \to \infty} \rho_{j}^{(1)}(\delta_{j})
\;=\; \lambda\,A_{-j}.
\]
Hence there exists a finite $\delta_{j} \ge 0$ satisfying
$\rho_{j}^{(1)}(\delta_{j}) \le \bar{\rho}$ if and only if
$\lambda A_{-j} < \bar{\rho}$, establishing \eqref{eq:feasibility}.
Under this condition, solving the equation
$\rho_{j}^{(1)}(\delta_{j}) = \bar{\rho}$ gives
\[
\mu_{j}^{(0)} + \delta_{j}
\;=\; \frac{w_{j}}{\bar{\rho}/\lambda - A_{-j}},
\]
and enforcing the nonnegativity requirement $\delta_{j} \ge 0$ yields
\eqref{eq:delta-optimal}.
\end{proof}
\begin{remark}
If $\lambda A_{-j} \ge \bar{\rho}$ for every $j \in \mathcal{J}$, no
single-mode capacity increment can achieve the target utilization.
The workload contributed by the remaining modes already exhausts the
admissible utilization budget, and further capacity at the selected
mode cannot compensate. In this regime, simultaneous adjustment of multiple modes, arrival
control, or modification of the post-diagnostic routing probabilities
$\{\gamma_i\}_{i=1}^{n}$ is required.
\end{remark}
\subsubsection{Minimum-investment mode selection}
Among target-reaching single-mode adjustments, the following rule
minimizes direct incremental capacity expenditure. Let $C_{\mu_{j}}$ denote the marginal cost of increasing the
service rate of mode $j$, and let
\begin{equation}
\mathcal{F}(\lambda, \bar{\rho})
\;=\; \left\{\, j \in \mathcal{J} \,:\, \lambda\,A_{-j} < \bar{\rho} \,\right\}
\label{eq:feasible-set}
\end{equation}
denote the set of feasible modes. Provided that $\mathcal{F}(\lambda,\bar{\rho})$ is nonempty, the
minimum-investment adjustment target is
\begin{equation}
j^{\ast}
\;\in\;
\arg\min_{j \in \mathcal{F}(\lambda,\bar{\rho})}
C_{\mu_{j}}\, \delta_{j,\bar{\rho}},
\label{eq:cost-selection}
\end{equation}
and the corresponding controlled service rate is
$\mu_{j^{\ast}}^{(0)}+\delta_{j^{\ast},\bar{\rho}}$.
The post-adjustment values of $\rho$, $L$, and $W$ are obtained by
substituting this controlled rate into the expressions of
Section~\ref{sec:performance-measures}, while the associated total
cost is evaluated using the cost function introduced in
Section~\ref{subsec:cost_convexity}.
\begin{corollary}
\label{cor:workload_increment}
Let $B_j=\lambda w_j/\mu_j^{(0)}$ and $\Delta_\rho=\rho^{(0)}-\overline{\rho}>0$; then, for every phase satisfying $B_j>\Delta_\rho$, the minimum capacity increment required to reduce the utilization from $\rho^{(0)}$ to $\bar{\rho}$ is
\[
\delta_{j,\bar{\rho}}
=
\mu_j^{(0)}
\frac{\Delta_\rho}{B_j-\Delta_\rho}.
\]
Consequently, the proportional increment
$\delta_{j,\bar{\rho}}/\mu_j^{(0)}$ is strictly decreasing in $B_j$.
If all candidate phases have the same baseline service rate, then the
absolute increment $\delta_{j,\bar{\rho}}$ is also strictly decreasing
in $B_j$.
\end{corollary}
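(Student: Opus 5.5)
The plan is to deduce the corollary directly from Proposition~\ref{prop:capacity} by rewriting its feasibility condition and optimal increment in terms of the baseline workload contributions $B_j$.

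\textbf{Feasibility.} By the decomposition \eqref{eq:workload}, the residual workload satisfies
\[
\lambda A_{-j}=\rho^{(0)}-B_j .
\]
Hence the feasibility condition \eqref{eq:feasibility}, $\lambda A_{-j}<\bar\rho$, becomes $\rho^{(0)}-B_j<\bar\rho$, that is, $B_j>\Delta_\rho$. So every phase covered by the corollary belongs to $\mathcal{F}(\lambda,\bar\rho)$, and Proposition~\ref{prop:capacity} applies to it. The condition $B_j>\Delta_\rho>0$ also gives $w_j>0$, as the proposition requires.

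\textbf{The increment formula.} Substitute into \eqref{eq:delta-optimal}. The denominator is
\[
\bar\rho/\lambda-A_{-j}=\bigl(\bar\rho-\rho^{(0)}+B_j\bigr)/\lambda=(B_j-\Delta_\rho)/\lambda .
\]
Using $\lambda w_j=B_j\mu_j^{(0)}$, this gives
\[
\frac{w_j}{\bar\rho/\lambda-A_{-j}}=\frac{\mu_j^{(0)}B_j}{B_j-\Delta_\rho}.
\]
Subtracting $\mu_j^{(0)}$ yields
\[
\mu_j^{(0)}\left(\frac{B_j}{B_j-\Delta_\rho}-1\right)=\mu_j^{(0)}\frac{\Delta_\rho}{B_j-\Delta_\rho}.
\]
This is strictly positive because $\Delta_\rho>0$ and $B_j>\Delta_\rho$, so the positive part $[\cdot]^+$ is inactive and the stated formula follows.

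\textbf{Monotonicity.} The proportional increment $\delta_{j,\bar\rho}/\mu_j^{(0)}=\Delta_\rho/(B_j-\Delta_\rho)$ depends on the phase only through $B_j$. As a function of $B_j$ on $(\Delta_\rho,\infty)$, its derivative is $-\Delta_\rho/(B_j-\Delta_\rho)^2<0$, so it is strictly decreasing in $B_j$. If all candidate phases share a common baseline rate $\mu^{(0)}$, then $\delta_{j,\bar\rho}$ is this decreasing function multiplied by the positive constant $\mu^{(0)}$, so it is also strictly decreasing in $B_j$.

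\textbf{Main obstacle.} The only delicate point is the bookkeeping in the first two steps: the identity $\lambda A_{-j}=\rho^{(0)}-B_j$ must be used correctly, and one must check that the positive part is inactive. Everything else is elementary algebra.
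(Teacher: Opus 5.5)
Your proposal is correct and follows essentially the same route as the paper: use $\lambda A_{-j}=\rho^{(0)}-B_j$, substitute it into the increment formula of Proposition~\ref{prop:capacity}, and differentiate $\Delta_\rho/(B_j-\Delta_\rho)$ with respect to $B_j$. You also check explicitly that the positive part $[\cdot]^+$ is inactive and that $w_j>0$, which the paper leaves implicit.
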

\begin{proof}
Since
\[
\rho^{(0)}=B_j+\lambda A_{-j},
\]
we have
\[
\lambda A_{-j}=\rho^{(0)}-B_j.
\]
Substituting this identity into the target-reaching increment gives
\[
\delta_{j,\overline{\rho}}
=
\mu_j^{(0)}
\frac{\rho^{(0)}-\overline{\rho}}
{B_j-(\rho^{(0)}-\overline{\rho})}
=
\mu_j^{(0)}
\frac{\Delta_\rho}{B_j-\Delta_\rho}.
\]
Feasibility requires $B_j>\Delta_\rho$. Moreover,
\[
\frac{\partial}{\partial B_j}
\left(
\frac{\delta_{j,\overline{\rho}}}{\mu_j^{(0)}}
\right)
=
-\frac{\Delta_\rho}{(B_j-\Delta_\rho)^2}<0.
\]
Thus, the proportional capacity increment is strictly decreasing
in $B_j$. When the baseline rates are equal across candidate phases,
the same ordering applies to the absolute increments.
\end{proof}
\subsection{Cost formulation and convexity}
\label{subsec:cost_convexity}
For treatment mode $i$, define
\begin{equation}
a_i=\beta_i+\beta\gamma_i,
\qquad i=1,\ldots,n,
\label{eq:cost_treatment_weight}
\end{equation}
so that $\lambda\beta/\mu_D$ and $\lambda a_i/\mu_{T_i}$
are, respectively, the long-run diagnostic and treatment-phase
occupancies. Note that $a_i$ coincides with the effective
service-requirement weight $w_{T_i}$ introduced in
\eqref{eq:weights}. The total cost rate is
\begin{align}
TC(\boldsymbol{\mu})
={}&
C_hL(\boldsymbol{\mu})
+
C_D\frac{\lambda\beta}{\mu_D}
+
\sum_{i=1}^{n}
C_{T_i}\frac{\lambda a_i}{\mu_{T_i}}
\nonumber\\
&+
C_{\mu_D}\mu_D
+
\sum_{i=1}^{n}C_{\mu_{T_i}}\mu_{T_i}.
\label{eq:total_cost_general_n}
\end{align}
Here, $C_h$ is the holding-cost rate per patient per unit time,
$C_D$ and $C_{T_i}$ are phase-active operating-cost rates, and
$C_{\mu_D}$ and $C_{\mu_{T_i}}$ are the amortized costs of
maintaining the corresponding service capacities. For $n=2$,
Equation~\eqref{eq:total_cost_general_n} is exactly
\begin{align}
TC(\mu_D,\mu_{T_1},\mu_{T_2})
={}&
C_hL(\mu_D,\mu_{T_1},\mu_{T_2})
+
C_D\frac{\lambda\beta}{\mu_D}
\nonumber\\
&+
C_{T_1}
\frac{\lambda(\beta_1+\beta\gamma_1)}{\mu_{T_1}}
+
C_{T_2}
\frac{\lambda(\beta_2+\beta\gamma_2)}{\mu_{T_2}}
\nonumber\\
&+
C_{\mu_D}\mu_D
+
C_{\mu_{T_1}}\mu_{T_1}
+
C_{\mu_{T_2}}\mu_{T_2}.
\label{eq:total_cost_n2}
\end{align}
Let
\[
y_D=\frac{1}{\mu_D},
\qquad
y_i=\frac{1}{\mu_{T_i}},
\quad i=1,\ldots,n.
\]
In these mean-service-time variables,
\begin{equation}
\rho(\mathbf y)
=
\lambda\left(
\beta y_D+\sum_{i=1}^{n}a_i y_i
\right),
\label{eq:rho_cost_y}
\end{equation}
and define
\begin{equation}
q(\mathbf y)
=
\beta y_D^2
+
\beta\sum_{i=1}^{n}\gamma_i y_Dy_i
+
\sum_{i=1}^{n}a_i y_i^2.
\label{eq:q_cost_y}
\end{equation}
Since $m_2=2q(\mathbf y)$, the Pollaczek--Khintchine expression gives
\begin{equation}
L(\mathbf y)
=
\rho(\mathbf y)
+
\frac{\lambda^2q(\mathbf y)}
{1-\rho(\mathbf y)}.
\label{eq:L_cost_y}
\end{equation}
Consequently,
\begin{align}
TC(\mathbf y)
={}&
C_h\left[
\rho(\mathbf y)
+
\frac{\lambda^2q(\mathbf y)}
{1-\rho(\mathbf y)}
\right]
+
\lambda C_D\beta y_D
+
\lambda\sum_{i=1}^{n}C_{T_i}a_i y_i
\nonumber\\
&+
\frac{C_{\mu_D}}{y_D}
+
\sum_{i=1}^{n}\frac{C_{\mu_{T_i}}}{y_i}.
\label{eq:TC_reciprocal}
\end{align}
Let
$\underline{\mu}_D\leq\mu_D\leq\overline{\mu}_D$ and
$\underline{\mu}_{T_i}\leq\mu_{T_i}\leq\overline{\mu}_{T_i}$.
For a prescribed utilization target $\overline{\rho}\in(0,1)$, the
joint problem is
\begin{equation}
\begin{aligned}
TC^{\mathrm{joint}}
=
\min_{\mathbf y}\quad&
TC(\mathbf y)\\
\text{s.t.}\quad&
\rho(\mathbf y)\leq\overline{\rho},\\
&
\frac{1}{\overline{\mu}_D}
\leq y_D\leq
\frac{1}{\underline{\mu}_D},\\
&
\frac{1}{\overline{\mu}_{T_i}}
\leq y_i\leq
\frac{1}{\underline{\mu}_{T_i}},
\qquad i=1,\ldots,n.
\end{aligned}
\label{eq:joint_TC_problem}
\end{equation}
The feasible set is nonempty if and only if
\begin{equation}
\lambda\left[
\frac{\beta}{\overline{\mu}_D}
+
\sum_{i=1}^{n}
\frac{a_i}{\overline{\mu}_{T_i}}
\right]
\leq\overline{\rho}.
\label{eq:joint_TC_feasibility}
\end{equation}
\begin{proposition}
\label{prop:TC_strict_convexity}
Suppose that $C_h\geq0$, $C_D\geq0$, $C_{T_i}\geq0$,
$C_{\mu_D}>0$, and $C_{\mu_{T_i}}>0$. If
\eqref{eq:joint_TC_feasibility} holds, then
problem~\eqref{eq:joint_TC_problem} is strictly convex in
$\mathbf y$ and has a unique global minimizer.
\end{proposition}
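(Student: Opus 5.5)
The plan is to show that the feasible set of \eqref{eq:joint_TC_problem} is a nonempty compact convex set, that $TC(\mathbf y)$ in \eqref{eq:TC_reciprocal} is continuous on it, and that $TC$ is strictly convex there. Existence then follows from Weierstrass, and uniqueness from strict convexity.

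\textbf{Feasible set.} The set is the intersection of a box with strictly positive lower bounds and the half-space $\rho(\mathbf y)\le\overline{\rho}$, which is linear by \eqref{eq:rho_cost_y}. It is therefore convex and compact. Condition \eqref{eq:joint_TC_feasibility} states exactly that the corner $y_D=1/\overline{\mu}_D$, $y_i=1/\overline{\mu}_{T_i}$ satisfies the load constraint, so the set is nonempty. On this set we have $\rho\le\overline{\rho}<1$ and all $y>0$, so every term of $TC$ is finite and smooth.

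\textbf{Convexity term by term.} The terms $\lambda C_D\beta y_D$ and $\lambda C_{T_i}a_iy_i$ are linear, and $C_h\rho(\mathbf y)$ is linear with $C_h\ge0$. The capacity terms $C_{\mu_D}/y_D$ and $C_{\mu_{T_i}}/y_i$ have second derivatives $2C/y^3>0$ on $y>0$. Since each depends on a separate coordinate and every coefficient is positive, their sum $\sum_j C_{\mu_j}/y_j$ has a diagonal positive definite Hessian and is strictly convex on the positive orthant. This term alone delivers strict convexity, so the remaining work is only to show that the queueing term $\lambda^2 q(\mathbf y)/(1-\rho(\mathbf y))$ is convex.

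\textbf{Convexity of the queueing term.} Write $q(\mathbf y)=\mathbf y^\top Q\mathbf y$, where $Q$ is symmetric with $Q_{DD}=\beta$, $Q_{ii}=a_i$, $Q_{Di}=\beta\gamma_i/2$, and zero off-diagonal entries among the treatment coordinates. Since $q=\tfrac12 m_2$ is a second moment, $q>0$ for positive $\mathbf y$, but we need $Q\succeq0$ on all directions. Use the Schur complement with respect to the diagonal block $\mathrm{diag}(a_i)$. This requires $a_i>0$; if some $a_i=0$, then $\gamma_i=0$ as well and the coordinate drops out of $q$. The condition becomes
\[
\beta-\sum_i \frac{\beta^2\gamma_i^2}{4a_i}\ge 0 .
\]
Because $a_i\ge\beta\gamma_i$, we get $\sum_i\beta^2\gamma_i^2/(4a_i)\le \tfrac{\beta}{4}\sum_i\gamma_i=\beta/4\le\beta$, so $Q\succeq0$. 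Factor $Q=F^\top F$, so that $q=\|F\mathbf y\|^2$. Then $\lambda^2 q/(1-\rho)=\lambda^2\|F\mathbf y\|^2/s$ with $s=1-\rho(\mathbf y)$ affine and positive. This is the quadratic-over-linear function $(u,s)\mapsto\|u\|^2/s$, the perspective of $\|u\|^2$, composed with an affine map. It is jointly convex on $s>0$ (Boyd--Vandenberghe), so the queueing term is convex and its coefficient $C_h\ge0$ preserves this.

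Adding the terms, $TC$ is a convex function plus a strictly convex one, hence strictly convex on the feasible convex set. A continuous strictly convex function on a nonempty compact convex set attains its minimum at exactly one point: if $\mathbf y^1\ne\mathbf y^2$ were both minimizers, their midpoint would be feasible and give a strictly smaller value. This yields the unique global minimizer.

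The \textbf{main obstacle} I expect is the positive semidefiniteness of $Q$ with its cross terms, together with the degenerate cases $a_i=0$ or $\beta=0$. I plan to handle these by the Schur-complement bound above, or directly by the inequality $\beta y_Dy_i\gamma_i\le \tfrac{\beta\gamma_i}{2}(y_D^2+y_i^2)$ applied summand by summand.
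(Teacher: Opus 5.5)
Your proposal is correct and follows essentially the same route as the paper: a nonempty, compact, convex feasible set; strict convexity supplied by the reciprocal capacity terms; and convexity of the congestion term as the perspective of a positive-semidefinite quadratic, composed with the affine map $\mathbf y\mapsto 1-\rho(\mathbf y)$. The only difference is that you certify $Q\succeq0$ by a Schur complement, whereas the paper writes $q=\sum_i\beta_iy_i^2+\beta\sum_i\gamma_i\bigl[(y_D+y_i/2)^2+\tfrac34y_i^2\bigr]$, a sum of squares with nonnegative coefficients that avoids your case split on $a_i=0$.
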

\begin{proof}
Using $a_i=\beta_i+\beta\gamma_i$ and
$\sum_{i=1}^{n}\gamma_i=1$, Equation~\eqref{eq:q_cost_y} can be
written as
\[
q(\mathbf y)
=
\sum_{i=1}^{n}\beta_i y_i^2
+
\beta\sum_{i=1}^{n}\gamma_i
\left(y_D^2+y_Dy_i+y_i^2\right).
\]
For every $i$,
\[
y_D^2+y_Dy_i+y_i^2
=
\left(y_D+\frac{y_i}{2}\right)^2
+\frac{3}{4}y_i^2,
\]
so $q(\mathbf y)$ is a positive-semidefinite quadratic form. Hence,
$q(\mathbf y)/t$ is convex for $t>0$. Taking
$t=1-\rho(\mathbf y)$ establishes convexity of the congestion term
throughout the feasible region. The phase-operating terms are affine,
whereas $C_{\mu_D}/y_D$ and $C_{\mu_{T_i}}/y_i$ are strictly convex
for positive $y_D$ and $y_i$. Thus, $TC(\mathbf y)$ is strictly
convex. The feasible set is nonempty, convex, and compact, and
$1-\rho(\mathbf y)\geq1-\overline{\rho}>0$ on that set. Existence
follows from continuity and compactness, while strict convexity gives
uniqueness.
\end{proof}
Because the reciprocal transformation is one-to-one for positive
service rates, the unique minimizer in $\mathbf y$ corresponds to a
unique global minimizer in the original service-rate variables over
the declared feasible intervals.
The first-order conditions explain how the joint optimum allocates
capacity. Let $\psi_D=\partial L/\partial y_D$,
$\psi_i=\partial L/\partial y_i$, and let $\eta^*\geq0$ be the
multiplier of the utilization constraint. For phases whose optimal
rates lie strictly inside their bounds,
\begin{align}
\frac{C_{\mu_D}}{(y_D^*)^2}
&=
C_h\psi_D(\mathbf y^*)
+
\lambda C_D\beta
+
\eta^*\lambda\beta,
\label{eq:TC_KKT_D}\\
\frac{C_{\mu_{T_i}}}{(y_i^*)^2}
&=
C_h\psi_i(\mathbf y^*)
+
\lambda C_{T_i}a_i
+
\eta^*\lambda a_i,
\qquad i=1,\ldots,n.
\label{eq:TC_KKT_T}
\end{align}
Thus, at the optimum, the marginal capacity cost equals the combined
marginal benefit from reduced congestion, reduced phase-active
operating time, and, when binding, relaxation of the utilization
constraint.
The two cost vectors
\[
(C_h,C_D,C_{T_1},C_{T_2},
C_{\mu_D},C_{\mu_{T_1}},C_{\mu_{T_2}})
\]
are
\[
(110,65,75,85,8,10,12)
\]
for Cost Set~I and
\[
(100,60,80,85,10,13,15)
\]
for Cost Set~II. The deterministic convex benchmark was computed in
the reciprocal variables and verified through the first-order
conditions. PSO, SA, and SCA were each evaluated over 30 independent
runs under a common budget of 3,000 objective evaluations. The
percentage gap for run $r$ is
\begin{equation}
\operatorname{Gap}_r(\%)
=
100\,
\frac{
TC_r-TC^{\mathrm{joint}}
}{
TC^{\mathrm{joint}}
}.
\label{eq:TC_heuristic_gap}
\end{equation}
By Proposition~\ref{prop:TC_strict_convexity}, the deterministic
solutions in Table~\ref{tab:TC_solver_comparison} are the unique global
minima over the continuous box $\Omega_{\mu}$:
\begin{align}
TC_{\mathrm{I}}^*
&=
388.934340
\quad\text{at}\quad
(9.494290,\,8.154714,\,7.292965),
\label{eq:TC_global_I}\\
TC_{\mathrm{II}}^*
&=
441.124097
\quad\text{at}\quad
(8.763651,\,7.455541,\,6.749184).
\label{eq:TC_global_II}
\end{align}
The optimal configurations also reveal the economic response to the
cost structure. Relative to Cost Set~I, Cost Set~II assigns a lower
penalty to patient holding and higher marginal costs to all three
service capacities. The numerical optimization is conducted using
$\lambda_D=1.5$, $\lambda_{T_1}=0.5$, $\lambda_{T_2}=0.7$,
$\gamma_1=0.6$, and $\gamma_2=0.4$.
The service-rate search region is
$\Omega_{\mu}=[8,16]\times[4,10]\times[5,12]$
for $(\mu_D,\mu_{T_1},\mu_{T_2})$. The globally optimal rates given in
\eqref{eq:TC_global_I} and~\eqref{eq:TC_global_II} therefore decrease
by approximately $7.70\%$, $8.57\%$, and $7.46\%$ for diagnosis,
Treatment~1, and Treatment~2, respectively. Consequently, the system
operates at a higher utilization, increasing from $0.5079$ to
$0.5516$, while the expected system size increases from $0.9602$ to
$1.1366$. Thus, when capacity becomes relatively more expensive and
congestion less costly, the economically optimal policy deliberately
accepts a higher level of congestion rather than maintaining the
service rates favoured under the congestion-dominant cost structure.
\begin{table}[htbp]
\centering
\caption{Total cost optimization results.}
\label{tab:cost_optimization_results}
\small
\setlength{\tabcolsep}{5pt}
\begin{tabular}{clcccc}
\toprule
\textbf{Cost set}
& \textbf{Method}
& $\boldsymbol{\mu_D}$
& $\boldsymbol{\mu_{T_1}}$
& $\boldsymbol{\mu_{T_2}}$
& \textbf{Best $TC$} \\
\midrule

\multirow{4}{*}{I}
& Convex benchmark
& 9.494290
& 8.154714
& 7.292965
& 388.934340 \\

& PSO
& 9.494290
& 8.154713
& 7.292966
& 388.934340 \\

& SA
& 9.500320
& 8.154542
& 7.291912
& 388.934379 \\

& SCA
& 9.395451
& 8.178245
& 7.325787
& 388.946251 \\
\midrule

\multirow{4}{*}{II}
& Convex benchmark
& 8.763651
& 7.455541
& 6.749184
& 441.124097 \\

& PSO
& 8.763649
& 7.455538
& 6.749184
& 441.124097 \\

& SA
& 8.766402
& 7.452622
& 6.748711
& 441.124125 \\

& SCA
& 8.672203
& 7.495156
& 6.713842
& 441.143992 \\
\bottomrule
\end{tabular}
\end{table}
The convex benchmark gives the unique global minimum of the
reciprocal variable formulation. PSO and SA reproduce the benchmark
with high numerical accuracy for both cost sets, while SCA also
provides near optimal solutions with a comparatively larger deviation.
The heuristic methods are used only as independent numerical checks of
the deterministic convex solution.

\subsection{Phase restricted adjustment}
\label{subsec:TC_restricted}
Let $\boldsymbol{\mu}^{(0)}$ be a baseline configuration and recall
the effective service-requirement weights $w_D=\beta$ and
$w_{T_i}=a_i$ together with the residual workload contribution
$A_{-j}$ introduced in Section~\ref{sec:load-triggered}
(Equations~\eqref{eq:weights} and~\eqref{eq:residual}). Phase $j$ can
attain the target under its maximum available rate
$\overline{\mu}_j$ if and only if
\begin{equation}
\lambda\left(
A_{-j}+\frac{w_j}{\overline{\mu}_j}
\right)
\leq\overline{\rho}.
\label{eq:TC_one_phase_feasibility}
\end{equation}
For every feasible phase, the cost-optimal restricted decision is
\begin{equation}
TC_j^{\mathrm{one}}
=
\min_{\mu_j}
TC\bigl(\mu_j,\boldsymbol{\mu}_{-j}^{(0)}\bigr),
\label{eq:TC_one_phase}
\end{equation}
subject to its capacity bounds and
$\rho\leq\overline{\rho}$. This is a one-dimensional strictly convex
problem in $y_j=1/\mu_j$. The preferred implementable intervention is
\begin{equation}
j^*
\in
\arg\min_{j\in\mathcal F}TC_j^{\mathrm{one}},
\qquad
TC^{\mathrm{one}}=TC_{j^*}^{\mathrm{one}},
\label{eq:TC_best_one_phase}
\end{equation}
where $\mathcal F$ is the set of phases satisfying
\eqref{eq:TC_one_phase_feasibility}. The economic cost of restricting
capacity adjustment to one phase is
\begin{equation}
G_{\mathrm{restrict}}
=
100\,
\frac{
TC^{\mathrm{one}}-TC^{\mathrm{joint}}
}{
TC^{\mathrm{joint}}
}
\geq0.
\label{eq:TC_restriction_gap}
\end{equation}

\section{Conclusion and Future Research}
\label{sec5}

This study examined a pooled diagnostic--treatment service under an FCFS discipline. For the two-treatment case, probability generating functions were derived and used to obtain key performance metrics. For an arbitrary number of treatment modes, a QBD-based matrix-analytic solution was developed and validated against the exact two-treatment results, moment identities,
flow-balance relations, and discrete-event simulation.

The analysis was further extended to operational decision making. A
load-triggered control determines whether capacity
augmentation at an individual service mode can attain a prescribed
utilization target. Convexity of the cost problem is established
through the perspective of a positive-semidefinite quadratic form, and the minimum is characterized through
KKT conditions. PSO, SA, and SCA are additionally
used as independent heuristic solvers, whose solutions coincide with the convex benchmark. Numerical experiments validate the analytical results and offer
actionable managerial insights. Thus, the framework provides a
tractable basis for evaluating congestion, capacity interventions,
and cost trade-offs in centralized specialty clinics and other
pooled healthcare-service settings.

Several extensions may broaden the applicability of the model:
\begin{itemize}
    \item developing a queueing-network or multiserver formulation for
    hospitals in which diagnostic and treatment departments operate
    simultaneously;

    \item incorporating nonstationary arrivals, patient abandonment,
    priority rules, and more general phase-type service requirements;

    \item calibrating and validating the model using hospital
    patient-flow and service-time data, and extending the capacity
    decisions to dynamic or data driven control policies.
\end{itemize}

\section*{Data availability} Not applicable, the study does not report any data. 
\section*{Declarations}
\subsection*{Conflict of interest} All the authors declare that they have no known conflicts of interest.




\bibliographystyle{elsarticle-num}
\bibliography{sn-bibliography}
\end{document}